\documentclass[12pt, a4paper, twoside]{article}
\usepackage{amsmath,amsfonts,amssymb,amsthm,amscd}
\usepackage{epsfig}
\numberwithin{equation}{section}
\newtheorem{theorem}{Theorem}[section]

\newtheorem{lemma}[theorem]{Lemma}
\newtheorem{recurrence}[theorem]{Recurrence}
\newtheorem{corollary}[theorem]{Corollary}
\newtheorem{observation}[theorem]{Observation}
\newtheorem{problem}{Problem}[section]
\newtheorem*{remark*}{Remark}
\newtheorem*{keywords*}{Keywords}

\long\def\onefigure#1#2#3#4{
\begin{figure}[ht]
\begin{center}
\scalebox{#2}
{
\includegraphics{#1}
}
\end{center}
\caption{\label{#4} #3}
\end{figure}
}

\newcommand{\myfig}[3]
{\onefigure{{#1.eps}} {#2} {#3} {fig:#1}}

\topmargin=-13 mm
\textheight=9.3in
\oddsidemargin 0mm   
\evensidemargin 0mm
\textwidth  6.2in

\DeclareMathOperator{\rmex}{ex}
\DeclareMathOperator{\MST}{MST}
\DeclareMathOperator{\SMT}{SMT}
\DeclareMathOperator{\AFF}{AFF}
\DeclareMathOperator{\DFF}{DFF}

\def\inst#1{$^{#1}$}

\begin{document}

\title{Tight bounds on the maximum size of a set of permutations with bounded VC-dimension
\thanks{
This is the authors' version of a work that was accepted for publication in 
Journal of Combinatorial Theory, Series A. \newline
An extended abstract of this paper appeared in Proceedings of the Twenty-Third Annual 
ACM-SIAM Symposium on Discrete Algorithms (SODA'12), SIAM, 2012, pp. 1113-–1122. \newline
Research was supported by the project CE-ITI (GACR P2020/12/G061) 
of the Czech Science Foundation,
by project no.\ 52410 of the Grant Agency of Charles University and
by the grant SVV-2012-265313 (Discrete Methods and Algorithms).
Josef Cibulka was also supported by the Czech Science Foundation under 
the contract no.\ 201/09/H057.
} 
} 

\author{Josef Cibulka\inst{1}, Jan Kyn\v{c}l\inst{2}
} 

\date{}

\maketitle

\begin{center}
{\footnotesize
\inst{1} 
Department of Applied Mathematics, \\
Charles University, Faculty of Mathematics and Physics, \\
Malostransk\'e n\'am.~25, 118~ 00 Praha 1, Czech Republic; \\ 
\texttt{cibulka@kam.mff.cuni.cz} 
\\\ \\
\inst{2}
Department of Applied Mathematics and Institute for Theoretical Computer Science, \\
Charles University, Faculty of Mathematics and Physics, \\
Malostransk\'e n\'am.~25, 118~ 00 Praha 1, Czech Republic; \\
\texttt{kyncl@kam.mff.cuni.cz}
}
\end{center}  

\begin{abstract}
The \emph{VC-dimension} of a family $\mathcal P$ of $n$-permutations is the largest integer $k$
such that the set of restrictions of the permutations in $\mathcal P$ on some $k$-tuple of positions
is the set of all $k!$ permutation patterns.
Let $r_k(n)$ be the maximum size of a set of $n$-permutations with VC-dimension $k$.
Raz showed that $r_2(n)$ grows exponentially in $n$. 
We show that $r_3(n)=2^{\Theta(n \log \alpha(n))}$ and for every $t\geq 1$, we have 
$r_{2t+2}(n) = 2^{\Theta(n \alpha(n)^t)}$ and $r_{2t+3}(n) = 2^{O(n \alpha(n)^{t} \log \alpha(n))}$.

We also study the maximum number $p_k(n)$ of $1$-entries in an $n\times n$ $(0,1)$-matrix 
with no $(k+1)$-tuple of columns containing all $(k+1)$-permutation matrices. 
We determine that, for example, $p_3(n) = \Theta(n\alpha(n))$ and $p_{2t+2}(n) = n 2^{(1/t!)\alpha(n)^t \pm O(\alpha(n)^{t-1})}$ 
for every $t \geq 1$.

We also show that for every positive $s$ there is a slowly growing function $\zeta_s(n)$ 
(for example $\zeta_{2t+3}(n) = 2^{O(\alpha^{t}(n))}$ for every $t \ge 1$) satisfying the following.
For all positive integers $n$ and $B$ and every $n \times n$ $(0,1)$-matrix $M$ with $\zeta_s(n)Bn$ $1$-entries,
the rows of $M$ can be partitioned into $s$ intervals so that at least $B$ columns contain at least $B$ 
$1$-entries in each of the intervals.
\end{abstract}

\begin{keywords*}
\rm
permutation pattern, VC-dimension, Davenport--Schinzel sequence, set of permutations, inverse Ackermann function
\end{keywords*}

\section{Introduction}
Let $\mathcal T$ be a set system on $[n]=\{1,2,\dots,n\}$. We say that a set $K\subset [n]$
is \emph{shattered} by $\mathcal T$ if every subset of $K$ appears as an intersection of $K$ and 
some set from $\mathcal T$.
The \emph{Vapnik--Chervonenkis dimension (VC-dimension)} of $\mathcal T$ is the size of the largest
set shattered by $\mathcal T$.
Sauer's lemma gives the exact value of the maximum size of a set system on $[n]$ with VC-dimension $k$, 
which is a polynomial in $n$ of degree $k$.
More on the VC-dimension and its history can be found for example in~\cite{LDG}.

Motivated by the so-called acyclic linear orders problem, Raz~\cite{Raz00} defined the VC-dimension of a set 
$\mathcal P$ of permutations:
Let $S_n$ be the set of all \emph{$n$-permutations}, that is, permutations of $[n]$.
The \emph{restriction of $\pi \in S_n$ to the $k$-tuple $(a_1, a_2, \dots, a_k)$ of positions} 
(where $1\leq a_1<a_2< \dots <a_k\leq n$) is the $k$-permutation 
$\pi'$ satisfying $\forall i,j: \pi'(i) < \pi'(j) \Leftrightarrow \pi(a_i) < \pi(a_j)$.
The $k$-tuple of positions $(a_1, \dots, a_k)$ is \emph{shattered by $\mathcal P$} if each
$k$-permutation appears as a restriction of some $\pi \in \mathcal P$ to $(a_1, \dots, a_k)$.
The \emph{VC-dimension of $\mathcal P$} is the size of the largest set of positions shattered by $\mathcal P$.
Let $r_k(n)$ be the size of the largest set of $n$-permutations with VC-dimension $k$. 

Raz~\cite{Raz00} proved that $r_2(n)\leq C^n$ for some constant $C$ and asked 
whether an exponential upper bound on $r_k(n)$ can also be found for every $k\geq 3$.

An $n$-permutation $\pi$ \emph{avoids} a $k$-permutation $\rho$ if none of the restrictions of $\pi$ to a $k$-tuple 
of positions is $\rho$. Clearly, the set of permutations avoiding $\rho \in S_k$ has VC-dimension smaller than $k$.
Thus, Raz's question generalizes the Stanley--Wilf conjecture which states that the number of $n$-permutations 
that avoid an arbitrary fixed permutation $\rho$ grows exponentially in $n$. The conjecture was settled by Marcus 
and Tardos~\cite{MarcusTardos} using a result of Klazar~\cite{Klazar00}.

We show in Section~\ref{sec:ub} that the size of a set of $n$-permutations with VC-dimension $k$ cannot 
be much larger than exponential in $n$. The result has an application in enumerating simple complete 
topological graphs~\cite{Kyncl}.
Let $\alpha(n)$ be the inverse of the Ackermann function; see Section~\ref{sub:fff} for its definition.

\begin{theorem}
\label{thm:vcub}
The sizes of sets of permutations with bounded VC-dimension satisfy
\begin{align*}
r_3(n) &\leq \alpha(n)^{(4+o(1))n}, \\
r_4(n) &\leq 2^{n \cdot \left(2 \alpha(n) + 3 \log_2(\alpha(n)) + O(1) \right)}, \\
r_{2t+2}(n) &\leq 2^{n \cdot ((2/t!)\alpha(n)^t + O(\alpha(n)^{t-1}) )} \qquad \textrm{for every $t \geq 2$ and} \\
r_{2t+3}(n) &\leq 2^{n \cdot ((2/t!)\alpha(n)^t \log_2(\alpha(n)) + O(\alpha(n)^t) } \qquad \textrm{for every $t \geq 1$.} \\
\end{align*}
\end{theorem}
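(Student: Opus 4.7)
The plan is to encode the family $\mathcal P$ as a single $(0,1)$-matrix and to exploit the matrix extremal bounds on $p_k(n)$ that are the other main result of this paper. Given $\mathcal P$ of $n$-permutations with VC-dimension at most $k$, form the $n \times n|\mathcal P|$ matrix $M$ obtained by concatenating the permutation matrices of the members of $\mathcal P$ horizontally. Then $M$ has exactly $n|\mathcal P|$ one-entries, and each column of $M$ is a standard basis vector, so an upper bound on the number of ones in $M$ translates directly into an upper bound on $|\mathcal P|$.

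The VC-dimension condition says that for every $(k+1)$-tuple of positions $(a_1,\dots,a_{k+1})$, at least one $(k+1)$-permutation pattern is missing as a restriction of some $\pi\in\mathcal P$. The core of the proof is to translate this into a forbidden-submatrix condition on $M$. Since the missing pattern may depend on the tuple, I expect one first partitions $\mathcal P$ into classes of permutations that agree on a chosen \emph{witness set} of positions; within each class the obstructions line up to give a single forbidden $(k+1)\times(k+1)$ permutation submatrix, and the number of classes can be controlled using the VC-dim hypothesis together with Sauer's lemma.

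Once the forbidden-submatrix condition is in place, apply the paper's bound $p_{2t+2}(n) = n\cdot 2^{(1/t!)\alpha(n)^t \pm O(\alpha(n)^{t-1})}$ and its odd-order variants to each class, then sum over classes. The extra factor $2$ in the exponent $(2/t!)$ of Theorem~\ref{thm:vcub}, as compared with $(1/t!)$ in $p_{2t+2}$, should arise from the fact that each permutation contributes $n$ ones to $M$: since the matrix extremal function is essentially linear in the matrix dimension, dividing out the $n$ ones per permutation yields a bound that is exponential in $n$, with the inverse-Ackermann term multiplied by~$2$. The separate treatment of $r_3$, $r_4$, $r_{2t+2}$ and $r_{2t+3}$ in the theorem mirrors the parity of $k$ in the $p_k$-bounds, so the four cases should follow uniformly from this scheme.

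The step I expect to be the main obstacle is the translation from the VC-dim condition to a global forbidden submatrix. The VC-dim obstruction is existential in the missing pattern, so the forbidden permutation may vary with the position tuple, whereas matrix extremal bounds are stated for a single fixed forbidden pattern. Bridging this gap without paying more than the targeted inverse-Ackermann cost will likely require the intermediate partitioning result announced at the end of the abstract (the one about splitting rows into $s$ intervals concentrating many ones in a small column set); that intermediate statement looks tailor-made to handle the recursive/inductive structure implicit in the four stated cases, and I would plug it in as the black box that turns the ``pattern missing for each tuple'' condition into something one can iterate.
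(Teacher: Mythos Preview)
Your concatenation approach has two fatal gaps. First, the $n\times n|\mathcal P|$ matrix $M$ does not inherit any forbidden-pattern condition from the VC hypothesis: the VC bound constrains each $(k+1)$-tuple of \emph{positions} in $[n]$, which corresponds to the same $(k+1)$ columns inside every block of $M$, not to an arbitrary $(k+1)$-tuple of columns of $M$. A tuple of columns of $M$ drawn from different permutations and different positions can perfectly well contain every $(k+1)$-pattern, so there is no global forbidden submatrix; your proposed partition-by-witness-set with Sauer's lemma does not repair this, because within any class the missing pattern still varies with the position tuple. Second, even granting a forbidden pattern, the bound would be vacuous: $p_k$-type extremal functions are near-linear in the number of columns, so for an $n\times N$ matrix with $N=n|\mathcal P|$ one only gets $n|\mathcal P|\le N\cdot(\text{slow-growing})$, which says nothing. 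In particular, your account of the factor $2$ in $(2/t!)$ cannot be right.

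The paper proceeds quite differently. It works with the $n\times n$ \emph{union matrix} $M_{\mathcal P}$ (a $1$ wherever some $\pi\in\mathcal P$ has a $1$) and the density $v(\mathcal P)=|M_{\mathcal P}|/n$. When $v(\mathcal P)$ is large, Lemma~\ref{lem:fatff} produces a $B$-fat $(r,k+1)$-formation in $M_{\mathcal P}$; a maximality argument on ``criss-crossed'' column tuples (this is exactly where the VC bound enters) then yields $\mathcal P'\subset\mathcal P$ with $v(\mathcal P')\le v(\mathcal P)-v(\mathcal P)^2/(\gamma_k(n)^2 n)$ while $|\mathcal P'|\ge |\mathcal P|/(2v(\mathcal P)^{2k})$. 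Iterating this Raz-style density decrement drives $v$ below $T\approx\gamma_k(n)^2\log\gamma_k(n)$, whereupon $|\mathcal P|\le T^n$; the factor $2$ in $(2/t!)$ comes from the square in $T$, since $\log\gamma_k(n)\approx(1/t!)\alpha(n)^t$ for $k=2t+2$.
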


On the other hand, we give a negative answer to Raz's question in Section~\ref{sec:lb}.
\begin{theorem}
\label{thm:vclb}
We have
\begin{align*}
r_3(n) &\ge (\alpha(n)/2-O(1))^{n} \qquad \textrm{and} \\
r_{2t+3}(n) \ge r_{2t+2}(n) &\ge 2^{n \cdot ((1/t!)\alpha(n)^t - O(\alpha(n)^{t-1}) )} \qquad \textrm{for every $t \geq 1$.} \\
\end{align*}
\end{theorem}

An \emph{$n$-permutation matrix} is an $n \times n$ $(0,1)$-matrix with exactly one $1$-entry in every row and 
every column. Permutations and permutation matrices are in a one-to-one correspondence that assigns to a permutation 
$\pi$ a permutation matrix $A_{\pi}$ with $A_{\pi}(i,j)=1 \Leftrightarrow \pi(j)=i$.

An $m\times n$ $(0,1)$-matrix $B$ \emph{contains} a $k\times l$ $(0,1)$-matrix $S$ if $B$ has a $k\times l$
submatrix $T$ that can be obtained from $S$ by changing some (possibly none) $0$-entries to $1$-entries. 
Otherwise $B$ \emph{avoids} $S$. Thus, a permutation $\pi$ avoids $\rho$ if and only if $A_{\pi}$ avoids $A_{\rho}$.
F\"{u}redi and Hajnal~\cite{FurediHajnal} studied the following problems from the extremal theory of
$(0,1)$-matrices. Given a matrix $S$ (the \emph{forbidden matrix}), what is the maximum number 
$\rmex_S(n)$ of $1$-entries 
in an $n\times n$ matrix that avoids $S$? This area is closely related to Tur\'{a}n problems on graphs and to 
Davenport--Schinzel sequences. Functions $\rmex_S$ or their asymptotics have been determined for some matrices 
$S$~\cite{FurediHajnal, PettieFH, Tardos05} and these results have found applications mostly in discrete 
geometry~\cite{BienstockGyori, EfratSharir, Furedi90, PachTardos} and also in the analysis 
of algorithms~\cite{PettieAppl}. 
The F\"{u}redi--Hajnal conjecture states that $\rmex_P(n)$ is linear in $n$ whenever $P$ is a permutation matrix.
Marcus and Tardos proved this conjecture by a surprisingly simple argument~\cite{MarcusTardos}.
This implied the relatively long standing Stanley--Wilf conjecture by Klazar's reduction~\cite{Klazar00}.
An improved reduction yielding the upper bound $2^{O(k \log k)n}$ on the size of a set of $n$-permutations 
with a forbidden $k$-permutation was found by the first author~\cite{Cibulka}.

We modify the question of F\"{u}redi and Hajnal and study the maximal number $p_k(n)$ of $1$-entries in an 
$n \times n$ matrix such that no $(k+1)$-tuple of columns contains all $(k+1)$-permutation matrices. 
It can be easily shown that $p_2(n) = 4n-4$. Indeed, consider an $n \times n$ matrix with at least $4n-3$
$1$-entries. Remove the highest and the lowest $1$-entry in every column. Then the first and the last
row of the resulting matrix contain no $1$-entry and thus one of the rows contains three $1$-entries. 
The three columns of the original matrix containing these $1$-entries contain every $3$-permutation matrix.
The lower bound $4n-4$ can be achieved for example by filling the two top rows and some two columns with $1$'s.

\begin{theorem}
\label{thm:extremal}
We have
\begin{align*}
2 n \alpha(n) - O(n) \leq p_3(n) & \leq O(n \alpha(n)),\\
p_{2t+2}(n) &= n 2^{(1/t!) \alpha(n)^t \pm O(\alpha(n)^{t-1}) } \quad \textrm{for every $t \geq 1$ and} \\
n 2^{(1/t!) \alpha(n)^t - O(\alpha(n)^{t-1})} \leq p_{2t+3}(n) 
&\leq n 2^{(1/t!) \alpha(n)^t \log_2(\alpha(n)) + O(\alpha(n)^t)} \quad \textrm{for every $t \geq 1$.}
\end{align*}
\end{theorem}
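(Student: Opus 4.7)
The shapes of the claimed estimates coincide, up to the extra $\log\alpha(n)$ factor in the odd cases, with the well-known maximum lengths $\lambda_k(n)$ of Davenport--Schinzel sequences of order $k$ over $[n]$, namely $\lambda_3(n)=\Theta(n\alpha(n))$, $\lambda_{2t+2}(n)=n\cdot 2^{(1/t!)\alpha(n)^t\pm O(\alpha(n)^{t-1})}$, and the analogous bounds with an extra $\log\alpha(n)$ for $\lambda_{2t+3}$. This strongly suggests $p_k(n)=\Theta(\lambda_k(n))$ (up to the same $\log\alpha$ gap), and my plan is to establish both directions of this correspondence.

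\emph{Upper bounds.} Given an $n\times n$ matrix $M$ with $N$ one-entries and no shattered $(k+1)$-tuple of columns, I would read the column indices of the one-entries row by row, left to right inside each row, producing a sequence $\sigma$ over $[n]$ of length $N$. After an $O(n)$ cleanup (collapsing consecutive duplicates at row boundaries), the core lemma to prove is: \emph{if $\sigma$ admits an alternating subsequence $abab\cdots$ of length $k+2$ on two symbols $a,b$, then the columns indexed by $a,b$ can be extended to a shattered $(k+1)$-tuple of columns of $M$.} The remaining $k-1$ columns would be harvested from one-entries lying in the rows strictly between successive alternation events, via a pigeonhole argument showing that enough common columns appear there to realize all $(k+1)!$ permutation patterns simultaneously. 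Contrapositively, $\sigma$ avoids the DS forbidden pattern of order $k$, so $N\le\lambda_k(n)+O(n)$, and inserting the known bounds $\lambda_{2t+2}(n)\le n\cdot 2^{(1/t!)\alpha(n)^t+O(\alpha(n)^{t-1})}$ and $\lambda_{2t+3}(n)\le n\cdot 2^{(1/t!)\alpha(n)^t\log_2\alpha(n)+O(\alpha(n)^t)}$ yields the stated estimates.

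\emph{Lower bounds.} For the matching construction I would start with an extremal Davenport--Schinzel sequence $\sigma^{\ast}$ of order $k$ and length close to $\lambda_k(n')$ over an alphabet $[n']$ with $n'$ slightly smaller than $n$, and \emph{pack} it into an $n\times n$ matrix: partition $\sigma^{\ast}$ into $n$ consecutive blocks of equal length and place a one-entry at position $(i,j)$ whenever symbol $j$ occurs in the $i$-th block. This yields a matrix with $\lambda_k(n')-O(n)$ one-entries, which, for $n'$ close enough to $n$, matches the claimed lower bound for even $k$ and the best known lower bound for odd $k$. The essential verification is that no $(k+1)$-tuple of columns of the resulting matrix is shattered: a shattered tuple would, by tracing the $(k+1)!$ forced row orderings, produce an alternating subsequence of length $k+2$ between two of its columns in $\sigma^{\ast}$, contradicting its DS-property.

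\emph{Main obstacle.} The crux is to quantitatively tighten the correspondence ``shattered $(k+1)$-tuple of columns $\Leftrightarrow$ forbidden DS alternation of order $k$''. The harder direction is the upper bound: harvesting the extra $k-1$ columns from rows between alternation events in a way that produces \emph{every} permutation pattern, rather than just many of them, is expected to require a careful induction on $k$ and a cleaner recursive setup that separates the even and odd cases (explaining why the odd case inherits an unavoidable $\log\alpha(n)$ loss, exactly as in the classical DS bounds). The lower-bound verification is more routine once the correspondence is formalized, though some care is needed to ensure that the packing into rows does not itself create shattered tuples through the multiple one-entries per row.
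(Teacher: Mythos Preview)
Your lower-bound plan is essentially the paper's: pack a $\mathrm{DS}(k)$-sequence into a matrix via the block construction and argue that a shattered $(k+1)$-tuple of columns forces the pattern $DS_{k+1}$ (an alternation of length $k+1$ on two of those columns), contradicting the DS property. The paper proves exactly this implication, but you should be aware that it is \emph{not} routine: the argument is an induction on $k$ that, at each step, peels off the pair of columns whose first $J_2$ sits lowest (respectively, whose last $1$-entry sits highest in the odd case) and applies the induction hypothesis to the remaining columns and rows. Simply ``tracing the $(k+1)!$ forced row orderings'' does not by itself isolate a single pair of columns with a full-length alternation.

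Your upper-bound plan, however, contains a genuine gap. The core lemma you want---``an alternation $abab\cdots$ of length $k+2$ on symbols $a,b$ implies a shattered $(k+1)$-tuple of columns''---is simply false. Take an $n\times n$ matrix whose only $1$-entries lie in columns $1$ and $2$, alternating row by row; the row-reading sequence is $1212\cdots$ of length $n$, with arbitrarily long alternations, yet no $4$-tuple (indeed no triple) of columns is shattered, since every other column is empty. There is nothing to ``harvest'' between alternation events, and no pigeonhole can conjure the missing $k-1$ columns out of a two-column matrix. The implication you have actually identified, ``shattered $(k+1)$-tuple $\Rightarrow$ alternation of length $k+1$ on two columns'', runs the wrong way for an upper bound: it yields $\mathrm{ex}_{DS_{k+1}}(n)\le p_k(n)$, which is again the lower bound.

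The right object for the upper bound is not a two-symbol alternation but a \emph{$(k+1,k+1)$-formation}: a $(k+1)$-tuple of columns together with a partition of the rows into $k+1$ intervals such that each column has a $1$-entry in each interval. Such a formation immediately realizes every $(k+1)$-permutation on those columns, so a matrix with no shattered $(k+1)$-tuple is $(k+1,k+1)$-formation-free. Reading the matrix row by row then gives a formation-free sequence (after a small sparsification), and Nivasch's bounds on $F_{r,s}(n)$---which match the DS bounds $\lambda_{s-1}(n)$ up to constants---give the claimed upper estimates. The coincidence with $\lambda_k(n)$ is thus indirect, via formation-free sequences, not via DS alternations on pairs of columns.
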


The upper bounds from Theorem~\ref{thm:extremal} are proven as Corollary~\ref{cor:extrub} in Section~\ref{sub:uones}
and the lower bounds as Corollary~\ref{cor:extrlb} in Section~\ref{sub:lmat}.

Let $S$ and $T$ be sequences. We say that $S$ \emph{contains} a pattern $T$ if $S$ contains a subsequence
$T'$ isomorphic to $T$, that is, $T$ can be obtained from $T'$ by a one-to-one renaming of the symbols.
A sequence $S$ over an alphabet $\Gamma$ is a \emph{Davenport--Schinzel sequence of order $s$} 
(a $\mathrm{DS}(s)$-sequence for short) if no symbol appears on two consecutive positions and $S$ does not 
contain the pattern $abab\dots$ of length $s+2$.
These sequences were introduced by Davenport and Schinzel~\cite{DavenportSchinzel} and found numerous
applications in computational and combinatorial geometry. More can be found in the book of Sharir and
Agarwal~\cite{DSbook}. 
Let $\lambda_s(n)$ be the maximum length of a
Davenport--Schinzel sequence over $n$ symbols. The following are the current best bounds on $\lambda_s(n)$.

\begin{align*}
2n \alpha(n) - O(n) &\leq \lambda_3(n) \leq 2n \alpha(n) + O \left( n \sqrt{\alpha(n)} \right), \\
n \cdot 2^{(1/t!) \alpha(n)^t - O\left(\alpha(n)^{t-1}\right)} 
& \leq \lambda_{2t+2}(n) 
\le n \cdot 2^{(1/t!)\alpha(n)^t + O\left(\alpha(n)^{t-1}\right)} \qquad \textrm{for $t\geq 1$ and}\\
n \cdot 2^{(1/t!) \alpha(n)^t - O\left(\alpha(n)^{t-1}\right)}	
& \leq\lambda_{2t+3}(n) 
\le n \cdot 2^{(1/t!) \alpha(n)^t \log_2 \alpha(n) + O\left(\alpha(n)^t\right)}	
\qquad \textrm{for $t\geq 1$}.
\end{align*}

The upper bound on $\lambda_3$ is by Klazar~\cite{Klazar99}, the lower bounds on $\lambda_s$ for $s>3$ are
by Agarwal, Sharir and Shor~\cite{AgarwalSharirShor} and all the other bounds were proved by Nivasch~\cite{Nivasch10}.

Pettie~\cite{PettieDSTightish} recently announced the following improved bounds:
\begin{align*}
\Omega(n \alpha(n) 2^{\alpha(n)}) \le \lambda_5(n) &\le O(n \alpha^2(n) 2^{\alpha(n)}) \qquad \text{and}\\
\lambda_{2t+3}(n) &\le n \cdot 2^{(1/t!) \alpha(n)^t (1 + o(1)) }
\qquad \textrm{for $t\geq 2$}.
\end{align*}

Our proofs are based on several results on Davenport--Schinzel sequences 
as well as on sequences with other forbidden patterns. 
The results on sequences that we use are mentioned in more detail in
Sections~\ref{sub:uones},~\ref{sub:fff}~and~\ref{sub:lseq},
where they are transformed into claims about matrices with forbidden patterns.

An \emph{$s$-partition of the rows} of an $m \times n$ matrix $M$ is a partition of the interval 
of integers $\{1, \dots, m\}$ into $s$ intervals 
$\{1=m_1,\dots, m_2-1\}$, $\{m_2,\dots, m_3-1\}$, \dots, $\{m_s,\dots, m=m_{s+1}-1\}$.
A matrix $M$ contains a \emph{$B$-fat $(r,s)$-formation} if there exists an $s$-partition
of the rows and an $r$-tuple of columns each of which has $B$ $1$-entries in each interval of rows. 
Note that the order of the columns in the matrix is not important for this notion.
See Fig.~\ref{fig:formation} for an example of a $1$-fat $(3,4)$-formation.
In Section~\ref{sub:fff}, we prove the following lemma, which gives an upper bound on the number 
of $1$-entries an $n \times n$ matrix can have and still not contain any $B$-fat $(B,s)$-formation.
It is used in the proof of Theorem~\ref{thm:vcub} 
in Section~\ref{sub:uperm}, analogously to the use of Raz's Technical Lemma~\cite{Raz00}. 

\begin{lemma}
\label{lem:fatff}
For all positive integers $s,n$ and $B$, an $n \times n$ matrix $M$ with at least $\zeta_s(n) B n$ 
$1$-entries contains a $B$-fat $(B, s)$-formation, where $\zeta_s(n)$ are functions of the form
\begin{align*}
\zeta_2(n) &= O(1), \quad \zeta_3(n) = O(\alpha(n)), \quad \zeta_4(n) = O(\alpha(n)^2), \quad \zeta_5(n) = O(\alpha(n) 2^{\alpha(n)}), \\
\zeta_{2t+3}(n) &= 2^{(1/t!)\alpha(n)^t + O(\alpha(n)^{t-1})} \qquad \textrm{for $t\geq 2$ and} \\
\zeta_{2t+4}(n) &= 2^{(1/t!)\alpha(n)^t \log(\alpha(n)) + O(\alpha(n)^t)} \qquad \textrm{for $t\geq 1$.}
\end{align*}
More generally, for all positive integers $m,n,s$ and $B$, an $m \times n$ matrix $M$ 
with at least $\zeta_s(m) B n$ $1$-entries contains a $B$-fat $(\lfloor nB/m\rfloor, s)$-formation.
\end{lemma}

The proof of the lemma is based on a proof of the upper bound on the number of symbols in
the so-called formation-free sequences (see definition in Section~\ref{sub:uones}) from Nivasch's 
paper~\cite{Nivasch10}.

By an argument similar to the proof of $p_2(n) \le 4n-4$ above, it is easy to verify that every $m \times n$ matrix $M$ 
with at least $3n$ $1$-entries contains a $1$-fat $(\lceil n/m\rceil, 3)$-formation.
A similar result for $2$-fat formations would slightly improve the upper bounds on $r_3(n)$ and $r_4(n)$.

\begin{problem}
Does there exist a constant $c$ such that for every $m$ and $n$, every $m \times n$ matrix $M$ 
with at least $c n$ $1$-entries contains a $2$-fat $(\lfloor n/m\rfloor, 3)$-formation?
\end{problem}

All logarithms in this paper are in base $2$.

\section{Upper bounds}
\label{sec:ub}

\subsection{Numbers of $1$-entries in matrices}
\label{sub:uones}

A sequence $S$ of length $l$ over an alphabet $\Gamma$ is a function $S:[l]\rightarrow \Gamma$.
An \emph{$(r,s)$-formation} is a sequence formed by $s$ concatenated permutations of the same 
$r$-tuple of symbols. The permutations in a formation are its \emph{troops}.
A sequence $S=(a_1, \dots, a_l)$ is \emph{$r$-sparse} if $a_i \neq a_j$ whenever $0 < |i-j| < r$.
An \emph{$(r,s)$-formation-free sequence} is a sequence that is $r$-sparse and contains no 
$(r,s)$-formation as a subsequence. Let $F_{r,s}(n)$ be the maximum length of an 
$(r,s)$-formation-free sequence over $n$ symbols. Formation-free sequences were first studied 
by Klazar~\cite{Klazar92}.

To be able to use results on sequences for matrices, we use the 
\emph{matrix$\rightarrow$sequence transcription $\MST$} (our name) 
defined by Pettie~\cite{PettieFH} who improved an earlier transcription by 
F\"{u}redi and Hajnal~\cite{FurediHajnal}. 
The letters of the sequence correspond to the columns of the matrix.
The matrix is transcribed row by row from top to bottom. Let $\mathrm{Seq}_{i-1}$ be the
sequence created from the first $i-1$ rows. We consider the set $C_i$ of letters 
corresponding to the columns having a $1$-entry in the row $i$. The letters in $C_i$ are ordered 
in the order of the last appearance in $\mathrm{Seq}_{i-1}$; the one that appeared last 
in $\mathrm{Seq}_{i-1}$ is first and so on. The letters 
that did not appear in $\mathrm{Seq}_{i-1}$ are ordered arbitrarily and placed after those
that did appear. The ordered sequence $C_i$ is then appended to $\mathrm{Seq}_{i-1}$.
The length of the resulting sequence $\MST(M) = \mathrm{Seq}_m$ is equal to the 
number of $1$-entries in $M$ and the size of the alphabet is $n$. 
Note that the previous papers~(\cite{FurediHajnal,PettieFH}) transcribe the matrices 
column by column instead of row by row.

A \emph{block} in a sequence is a contiguous subsequence containing only distinct symbols.
Note that $\MST(M)$ can be decomposed into $m$ or fewer blocks.

A set $S$ of $rs$ $1$-entries forms an \emph{$(r,s)$-formation} in $M$ if there exists an $s$-partition
of the rows and an $r$-tuple of columns each of which has a $1$-entry of $S$ in every interval of rows 
of the partition. See Fig.~\ref{fig:formation}. 
In this and all other figures, circles and full circles represent the $1$-entries and empty space
represents the $0$-entries.
A matrix $M$ is \emph{$(r,s)$-formation-free} if it contains no $(r,s)$-formation.

\myfig{formation}{0.6}{A $(3,4)$-formation on columns $j_1$, $j_2$ and $j_3$. Full circles represent 
the $1$-entries of the formation. Empty circles represent $1$-entries outside of this formation.
}

\begin{lemma}
\label{lem:form2spl}
A $(0,1)$-matrix $M$ contains an $(r,s)$-formation if and only if $\MST(M)$ contains an $(r,s)$-formation.
\end{lemma}
\begin{proof}
Observe that an $(r,s)$-formation in a matrix $M$ implies an $(r,s)$-formation in $\MST(M)$. 

The proof of the other direction is more complicated, because symbols of one block of $\MST(M)$ may be present
in two troops of the $(r,s)$-formation in $\MST(M)$. 
To overcome this complication, we consider such an $(r,s)$-formation in $\MST(M)$, whose each troop ends 
earliest possible. Assume that the $i$-th troop ends with an occurrence of a symbol $a$ in the $j$-th
block of $\MST(M)$ and that the $(i+1)$-st troop begins with $b$ from the $j$-th block. 
Since $a$ precedes $b$ in the $j$-th block, we know, by the definition of $\MST(M)$, that $a$ appears 
somewhere between the occurrences of $b$ and $a$ of the $i$-th troop. Therefore, the $i$-th troop could
end earlier, contradicting the selection of the $(r,s)$-formation.
\end{proof}

Nivasch gives the following upper bound on the maximum length $F_{r,s}(n)$ of an $(r,s)$-formation-free 
sequence on $n$ symbols:
\begin{theorem}{\rm (\cite[Theorem 1.3]{Nivasch10})}
\label{thm:nivasch13}
For every $r \in \mathbb{N}$
\[
F_{r,4}(n) \leq O(n \alpha(n)).
\]
For every $r$ and every $s \geq 5$, letting $t:=\lfloor (s-3)/2 \rfloor$, we have
\begin{align*}
F_{r,s}(n) & \leq n 2^{(1/t!)\alpha(n)^t \log(\alpha(n)) + O(\alpha(n)^t)} \qquad  \textrm{when $s$ is even and}\\
F_{r,s}(n) & \leq n 2^{(1/t!)\alpha(n)^t + O(\alpha(n)^{t-1})} \qquad  \textrm{when $s$ is odd}.
\end{align*}
\end{theorem}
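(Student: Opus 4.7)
The plan is to adapt the hierarchical block-decomposition strategy that gives the bounds on $\lambda_s(n)$ (due to Hart--Sharir for $s=3$, Agarwal--Sharir--Shor and later Nivasch for general $s$). An $(r,s)$-formation is only a mild generalization of the alternating pattern $abab\dots$ of length $s+1$: informally, the two quantities $F_{r,s}(n)$ and $\lambda_{s-1}(n)$ should share the same asymptotic growth up to a factor depending on $r$, so the same machinery ought to deliver the bound.

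First I would introduce a two-parameter refinement $F_{r,s}(n,m)$, the maximum length of an $(r,s)$-formation-free sequence on $n$ symbols that decomposes into $m$ blocks (maximal runs of pairwise distinct symbols), and note that $F_{r,s}(n) \le F_{r,s}(n, F_{r,s}(n))$ together with a trivial bound $F_{r,s}(n,m) \le mn$ reduce the task to proving a recurrence for $F_{r,s}(n,m)$. The base cases can be handled directly: an $(r,3)$-formation-free sequence has length $O_r(n)$ by a first/last-occurrence counting argument, and $F_{r,4}(n,m) = O_r((n+m)\alpha(n))$ follows by contracting each block to its distinct symbols, which produces a sequence whose associated pattern-avoidance reduces to the DS-sequence of order~$3$ bound.

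For the inductive step $s \ge 5$, partition the $m$ blocks into $\beta$ intervals, where $\beta$ is chosen so that the recurrence unrolls through the Ackermann hierarchy. Call a symbol \emph{local} to an interval if all its occurrences lie there, and \emph{global} otherwise. Local symbols contribute at most $\beta \cdot F_{r,s}(n/\beta, m/\beta)$ by induction on $n$ inside each interval. For the global symbols, contract their occurrences in each interval to a single block; the resulting contracted sequence (of length $\beta \cdot n$ in the worst case) must avoid an $(r',s{-}2)$-formation for a suitable $r'$ depending on~$r$, because two troops per interval in the contracted sequence could be expanded back to $s$ troops on $r$ symbols in the original, contradicting the formation-freeness. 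Applying the inductive hypothesis on $s$ to the contracted sequence and choosing $\beta$ according to the appropriate level of the Ackermann hierarchy yields the claimed bounds for $s$ even and odd respectively; the logarithmic factor in the odd case reflects the standard loss in the Hart--Sharir-type recursion used for odd $s$.

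The main obstacle is the lifting step. Since the troops of an $(r,s)$-formation are \emph{arbitrary} permutations of the $r$-tuple of symbols, rather than repeated copies of a fixed word, reconstructing a genuine $(r,s)$-formation in the original sequence from an $(r',s{-}2)$-formation in the contracted sequence must allow for intra-interval reorderings: one typically doubles $s{-}2$ to $s$ by using the two extra troops to realign the local orderings and multiplies the column count by a factor depending only on $r$ to accommodate the reordering. Pinning down this constant so that it remains absorbed in the $O(\alpha(n)^{t-1})$ error terms, and verifying that the choice of $\beta$ at each Ackermann level produces exactly the main term $(1/t!)\alpha(n)^t$, is the delicate combinatorial part of the argument.
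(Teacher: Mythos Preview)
The paper does not prove this theorem; it is quoted from Nivasch~\cite{Nivasch10} and used as a black box in the proof of Lemma~\ref{lem:formfree}. So there is no proof in the paper to compare your sketch against. That said, the paper does reproduce a close variant of Nivasch's machinery in Section~\ref{sub:fff} (for doubled formations in matrices), and your sketch departs from that machinery in ways that matter.

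First, a small slip: you attribute the extra $\log\alpha(n)$ factor to odd $s$, but in the statement it appears for \emph{even} $s$. This is the usual index shift: $F_{r,s}$ behaves like $\lambda_{s-1}$, and the logarithmic penalty for DS sequences hits odd orders, hence even $s$ here.

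More seriously, the single local/global split followed by ``contract each global symbol once per interval and argue the contracted sequence is $(r',s{-}2)$-formation-free'' does not work as stated. If a global symbol has only one occurrence in a given interval, an $(r',s{-}2)$-formation in the contracted sequence yields only one occurrence per interval per symbol in the original, and you cannot manufacture the two extra troops you promise. The actual Nivasch recursion (mirrored in Recurrence~\ref{rec:rec2} of this paper) refines the global contribution: occurrences in an interval that contain the symbol's global first (respectively last) appearance feed an $s{-}1$ term; occurrences flanked on both sides outside the interval feed an $s{-}2$ term; and symbols spread thinly across many intervals are handled by contracting intervals and recursing on the number of blocks with the \emph{same} $s$. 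It is this last ``scattered'' term, together with the multiplicity thresholds $R_s(d)$ that it forces, that drives the Ackermann-type growth and lets one set $d=\alpha(n)$ at the end to extract the leading exponent $(1/t!)\alpha(n)^t$. Your sketch collapses the middle and scattered cases together and therefore loses exactly the mechanism that produces the claimed constants.
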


Let $p'_k(n)$ be the maximum number of $1$-entries in an $(k+1,k+1)$-formation-free $n \times n$ matrix. Theorem~\ref{thm:nivasch13} implies the following upper bounds on $p'_k(n)$.

\begin{lemma}
\label{lem:formfree}
We have
\[
p'_3(n) \leq O(n \alpha(n)).
\]
 
For every fixed $k\geq 4$, letting $t:=\lfloor (k-2)/2 \rfloor$, we have
\begin{align*}
p'_k(n) & \leq n 2^{(1/t!)\alpha(n)^t \log(\alpha(n)) + O(\alpha(n)^t)}
\qquad  \textrm{when $k$ is odd and} \\
p'_k(n) & \leq n 2^{(1/t!)\alpha(n)^t + O(\alpha(n)^{t-1})} \qquad \textrm{when $k$ is even.}
\end{align*}
\end{lemma}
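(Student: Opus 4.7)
The plan is to convert the matrix problem to a sequence problem via the transformation $\mathrm{MST}$, transfer the formation-avoidance condition using Observation~\ref{obs:form2spl}, and then invoke Nivasch's Theorem~\ref{thm:nivasch13}. The only subtlety is that $\mathrm{MST}(M)$ need not be sparse, whereas $F_{r,s}(n)$ counts only $r$-sparse sequences, so an additional sparsification step is required.

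Fix an $n \times n$ matrix $M$ that is $(k+1,k+1)$-formation-free and let $N$ denote its number of $1$-entries. Put $r := (k+1)^2$, $s := k+1$, and $S := \mathrm{MST}(M)$. Then $|S| = N$, the alphabet of $S$ has size at most $n$, and $S$ decomposes naturally into at most $n$ blocks (one per row of $M$), each consisting of pairwise distinct symbols. By the contrapositive of Observation~\ref{obs:form2spl}, applied with both of its parameters equal to $k+1$, $S$ contains no $((k+1)^2, k+1)$-formation, that is, no $(r,s)$-formation.

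Next I would extract an $r$-sparse subsequence $S'$ of $S$ by a greedy left-to-right scan: maintain a window of the $r-1$ most recently retained symbols, and keep the current symbol if and only if it does not appear in this window. By construction $S'$ is $r$-sparse, it inherits the absence of $(r,s)$-formations from $S$ (as a subsequence), and it is still written over at most $n$ symbols. The key estimate is that within any single block $B$ of $S$ at most $r-1$ symbols are skipped: indeed, the symbols of $B$ are pairwise distinct, so a skipped symbol of $B$ must coincide with some window entry that was already present \emph{before} the processing of $B$ began, and that initial window contains at most $r-1$ distinct entries. Summing over the at most $n$ blocks gives $|S'| \ge N - (r-1)n$. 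Then Theorem~\ref{thm:nivasch13} applied to $S'$ yields $N \le F_{r,s}(n) + (r-1)n$, and the linear loss $(r-1)n$ is absorbed by the superlinear bounds on $F_{r,s}$ in each of the three cases ($s = 4$; $s \ge 5$ odd; $s \ge 5$ even), producing the three inequalities in the statement.

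The main obstacle is the sparsification step: without structural control, greedy sparsification could in principle destroy an arbitrary fraction of the sequence. It is precisely the fact that every block of $\mathrm{MST}(M)$ consists of distinct symbols -- an automatic consequence of reading $M$ row by row -- that caps the per-block loss at $r-1$ and the total loss at $O(n)$, allowing the matrix bound to inherit the asymptotics of $F_{r,s}(n)$ verbatim.
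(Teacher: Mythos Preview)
Your proof is correct and follows essentially the same route as the paper: apply $\mathrm{MST}$, use Observation~\ref{obs:form2spl} to rule out $((k+1)^2,k+1)$-formations, sparsify with an $O(n)$ loss, and invoke Theorem~\ref{thm:nivasch13}. The only difference is the sparsification mechanism---your greedy scan with a per-block loss bound versus the paper's row-merging argument---and yours is arguably cleaner, since it directly produces the $(k+1)^2$-sparsity that the definition of $F_{(k+1)^2,k+1}$ requires.
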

\begin{proof}
Take a $(k+1,k+1)$-formation-free matrix $M$. Then $\MST(M)$ does not contain 
any $(k+1,k+1)$-formation by Lemma~\ref{lem:form2spl}.

The sequence $\MST(M)=a_1,a_2,\dots,a_p$ can be made $(k+1)$-sparse by removing at most $kn$ 
occurrences of symbols. Indeed, whenever two occurrences $a_i, a_j$ (where $i<j$) of the same symbol 
appear at distance at most $k$, then $a_i$ is among the last $k$ symbols preceding the block containing 
$a_j$. Thus, it suffices to take the blocks from left to right and in each of them remove the at most 
$k$ symbols that appear as the last $k$ symbols preceding the block.
The resulting sequence is thus a $(k+1,k+1)$-formation-free sequence of length differing 
by $O(n)$ from the number of $1$-entries of $M$. The result then follows from Theorem~\ref{thm:nivasch13}.
\end{proof}

This proves the upper bounds in Theorem~\ref{thm:extremal} by observing that a $(k+1)$-tuple of columns with 
a $(k+1,k+1)$-formation contains every $(k+1)$-permutation matrix.
\begin{corollary}
\label{cor:extrub}
For every fixed $k\geq 3$ if we let $t:=\lfloor (k-2)/2 \rfloor$, then
\begin{align*}
p_3(n) & \leq O(n \alpha(n)),  \\
p_k(n) & \leq n 2^{(1/t!)\alpha(n)^t \log(\alpha(n)) + O(\alpha(n)^t)}
\qquad  \textrm{when $k$ is odd and greater than $3$ and} \\
p_k(n) & \leq n 2^{(1/t!)\alpha(n)^t + O(\alpha(n)^{t-1})} \qquad \textrm{when $k$ is even.}
\end{align*}
\end{corollary}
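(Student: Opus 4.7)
The plan is to derive the corollary directly from Lemma~\ref{lem:formfree} via the single observation already hinted at in the sentence preceding the statement: any $(k+1,k+1)$-formation in a matrix forces the $(k+1)$ columns on which it lives to contain every $(k+1)$-permutation matrix. Once this is established, the bounds on $p_k(n)$ reduce to the bounds on $p'_k(n)$ that Lemma~\ref{lem:formfree} already provides.

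First I would verify the observation carefully. Fix a $(k+1,k+1)$-formation in $M$ on columns $c_1 < c_2 < \dots < c_{k+1}$ with respect to some $(k+1)$-partition $R_1, R_2, \dots, R_{k+1}$ of the rows, so that each column $c_j$ carries a $1$-entry in each row-interval $R_i$. Given any permutation $\pi \in S_{k+1}$, choose for each $i \in [k+1]$ a row $r_i \in R_i$ such that $M(r_i, c_{\pi(i)}) = 1$; such a row exists by the defining property of the formation. Since $R_1, \dots, R_{k+1}$ are disjoint intervals ordered from top to bottom, the rows $r_1 < r_2 < \dots < r_{k+1}$ are distinct and naturally ordered, so the $(k+1) \times (k+1)$ submatrix on rows $r_1, \dots, r_{k+1}$ and columns $c_1, \dots, c_{k+1}$ has a $1$-entry at each position $(i, \pi(i))$. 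Hence it contains the permutation matrix $A_\pi$, and since $\pi$ was arbitrary, the $(k+1)$-tuple $(c_1, \dots, c_{k+1})$ contains all $(k+1)$-permutation matrices.

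Second, I would take the contrapositive. If $M$ is an $n \times n$ matrix in which no $(k+1)$-tuple of columns contains all $(k+1)$-permutation matrices, then by the observation $M$ cannot contain any $(k+1,k+1)$-formation, so $M$ is $(k+1,k+1)$-formation-free. This gives the inequality $p_k(n) \leq p'_k(n)$, and plugging the three cases of Lemma~\ref{lem:formfree} (namely $k=3$, $k$ odd and $\geq 5$, and $k$ even) yields exactly the three displayed bounds.

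There is essentially no obstacle here; the only point that deserves care is making sure the ordering of the chosen rows $r_1 < \dots < r_{k+1}$ is automatic, which is what lets the submatrix literally contain the pattern $A_\pi$ rather than just some reordering of it. Once that is stated, the corollary is a one-line consequence of Lemma~\ref{lem:formfree}.
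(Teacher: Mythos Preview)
Your proposal is correct and follows exactly the approach the paper intends: the paper's own proof environment is left empty, with the content supplied by the sentence just before the corollary (``a matrix with a $(k+1,k+1)$-formation contains every $(k+1)$-permutation matrix'') together with Lemma~\ref{lem:formfree}. You have simply spelled out that observation in full detail, which is fine; the only cosmetic slip is that the submatrix you build is literally $A_{\pi^{-1}}$ rather than $A_\pi$ under the paper's convention, but since $\pi$ ranges over all of $S_{k+1}$ this is immaterial.
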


\subsection{Fat formations in matrices}
\label{sub:fff}

A sequence $S$ is an \emph{$\AFF_{r,s,k}(m)$-sequence}\footnote{
AFF is an abbreviation for \emph{almost-formation-free}.}
if it contains no $(r,s)$-formation as a subsequence, can be decomposed into $m$ or fewer 
blocks and each symbol of the sequence appears at least $k$ times. 
Let $\Pi'_{r,s,k}(m)$ be the maximum number of symbols in an $\AFF_{r,s,k}(m)$-sequence.

Let $\alpha_d(m)$ be the $d$th function in the \emph{inverse Ackermann hierarchy}. 
That is, $\alpha_1(m)=\lceil m/2 \rceil$,
$\alpha_d(1)=0$ for $d \geq 2$ and $\alpha_d(m) = 1+ \alpha_d(\alpha_{d-1}(m))$ for $m,d  \geq 2$.
The \emph{inverse Ackermann function} is defined as $\alpha(m) := \min\{k: \alpha_k(m) \leq 3\}$.

Nivasch defines a hierarchy of functions $R_s(d)$, which we shift by $1$ in the index. That is, our $R_s(d)$
is the original $R_{s-1}(d)$. We thus have the functions defined for $s \geq 2$ and $d \geq 2$. 
The values are $R_2(d)=2$, $R_3(d)=3$, $R_4(d)=2d+1$, $R_s(2)=2^{s-2}+1$ and
\[
R_s(d) = 2 (R_{s-1}(d)-1) + (R_{s-2}(d)-1)(R_s(d-1)-3) + 1 \qquad \textrm{when $s \geq 5$ and $d \geq 3$}.
\]
For $s\geq 5$, if we let $t=\lfloor (s-3)/2 \rfloor$, then $R_s(d) = 2^{(1/t!)d^t \log(d) + O(d^t)}$
if $s$ is even and $R_s(d) = 2^{(1/t!)d^t + O(d^{t-1})}$ when $s$ is odd. 

\begin{lemma} {\rm (\cite[Corollary 5.14]{Nivasch10})}
\label{lem:nivasch514}
For every $d\geq 2$, $s \geq 3$, $r\geq 2$, $m$ and $k$ satisfying $m \geq k \geq R_s(d)$ we have
\[
\Pi'_{r,s,k}(m) \leq c'_s r m \alpha_d(m)^{s-3},
\]
where $c'_s$ is a constant depending only on $s$.
\end{lemma}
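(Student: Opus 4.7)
The plan is to prove the bound $\Pi'_{r,s,k}(m) \le c'_s\, r\, m\, \alpha_d(m)^{s-3}$ by double induction, with outer induction on $s$ and inner induction on $d$, mirroring Nivasch's hierarchy $R_s(d)$. The exponent $s-3$ and the quantitative recurrence for $R_s(d)$ are the signal that one should reduce $s$ via formation arguments and reduce $d$ via block-partitioning.

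For the base of the outer induction, take $s = 3$: the bound reads $\Pi'_{r,3,k}(m) \le c'_3 r m$. An $(r,3)$-formation-free sequence in which every symbol occurs at least $k \ge R_3(d) = 3$ times can be shown directly to have $O(rm)$ symbols by a charging argument against the $m$ blocks, since three interleaved occurrences of the same $r$-tuple are forbidden. For the base of the inner induction, take $d = 2$, so $\alpha_2(m) = \lceil m/2 \rceil$-type behavior is replaced by the classical iterated-logarithm bound, which for generalized Davenport--Schinzel sequences with $k$ forced repetitions per symbol is standard.

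For the inductive step with $d \ge 3$ and $s \ge 4$, given an $\mathrm{AFF}_{r,s,k}(m)$-sequence $S$, I would split its $m$ blocks into consecutive \emph{chunks}, each consisting of $R_s(d-1)$ blocks, yielding $m' := \lceil m / R_s(d-1) \rceil$ chunks. Following Nivasch, classify each symbol $a$ as \emph{local} to a chunk $C$ if every occurrence of $a$ lies in $C$, and \emph{global} otherwise. Local symbols in each chunk form an $\mathrm{AFF}_{r,s,k}$-sequence on at most $R_s(d-1)$ blocks, so the inner induction (decreasing $d$ to $d-1$ while fixing $s$) bounds their count in one chunk by $c'_s r R_s(d-1) \alpha_{d-1}(R_s(d-1))^{s-3}$. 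Summing over chunks gives a contribution at most $c'_s r m \alpha_{d-1}(R_s(d-1))^{s-3}$, which via $\alpha_d(m) = 1 + \alpha_d(\alpha_{d-1}(m))$ telescopes correctly into $c'_s r m \alpha_d(m)^{s-3}$. For global symbols, contract each chunk to a single superblock, keeping only the first and last occurrence of each global symbol per chunk. The resulting sequence on $m'$ superblocks inherits formation-freeness, and one shows that a global symbol spanning few chunks can be charged to an $(r, s-1)$-formation-free structure, while a global symbol spanning many chunks would, by the pigeonhole consequence of $k \ge R_s(d)$, force an $(r,s)$-formation through the middle troops — exactly the configuration forbidden in $S$.

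The hard part, and the step that produces the precise recurrence $R_s(d) = 2(R_{s-1}(d)-1) + (R_{s-2}(d)-1)(R_s(d-1)-3) + 1$, is this charging of global symbols: one must split the occurrences of each global symbol into a prefix, middle, and suffix part relative to its chunks, argue that the prefix and suffix contribute an $(r, s-1)$-formation-free sequence on $m'$ blocks (yielding the factor $2(R_{s-1}(d)-1)$), and that the middle occurrences form an $(r, s-2)$-formation-free structure whose length is controlled by the inner induction on $d$ applied with parameter $R_s(d-1)-3$ (yielding the product $(R_{s-2}(d)-1)(R_s(d-1)-3)$). Choosing $c'_s$ large enough to absorb the constants coming from these two reductions, and verifying that the multiplicity condition $k \ge R_s(d)$ suffices at every recursive call, are where the bookkeeping concentrates. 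Once this accounting checks out, combining the local and global bounds closes the induction and yields the claimed estimate.
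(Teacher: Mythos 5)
First, a point of comparison: the paper does not prove this lemma at all --- it is quoted verbatim as Corollary 5.14 of Nivasch, and the only original content is the remark that the linear dependence on $r$, while not stated explicitly by Nivasch, can be read off from his proof (linearity in the base cases, Lemmas 5.9 and 5.10, and right-hand sides of Recurrences 5.11 and 5.13 factoring as $r$ times an $r$-free expression). The closest thing to a written-out argument in this paper is the parallel development for the \emph{doubled} formation-free matrices (Recurrences~\ref{rec:rec1} and~\ref{rec:rec2}, Corollaries~\ref{cor:dblffd2} and~\ref{cor:dblffhier}), which mirrors Nivasch's scheme. Your proposal attempts to reconstruct that scheme, and its overall shape (double induction tied to the hierarchy $R_s(d)$, local/global classification, contraction of layers) is right, but the quantitative core is wrong.

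The concrete gap is your choice of chunk size and the role you assign to each induction parameter. You take chunks of $R_s(d-1)$ blocks, i.e.\ of size independent of $m$, handle local symbols by decreasing $d$, and handle global symbols on the contracted sequence of $m'\approx m/R_s(d-1)$ superblocks. But then the scattered/global symbols, bounded at level $d-1$ on $m'$ superblocks, contribute on the order of $c'_s\,r\,(m/R_s(d-1))\,\alpha_{d-1}(m')^{s-3}$, and since $R_s(d-1)$ is a constant while $\alpha_{d-1}(m)$ can be arbitrarily larger than $\alpha_d(m)$ (e.g.\ $d=3$: $\alpha_2(m)\approx\log m$ versus $\alpha_3(m)\approx\log^* m$), this term alone already exceeds the target bound $c'_s r m\,\alpha_d(m)^{s-3}$; no choice of $c'_s$ repairs this, and the claimed telescoping via $\alpha_d(m)=1+\alpha_d(\alpha_{d-1}(m))$ never materializes because $\alpha_{d-1}$ is being applied to the constant $R_s(d-1)$ rather than to $m$. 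In the actual proof (cf.\ Recurrence~\ref{rec:rec2} and Corollary~\ref{cor:dblffhier} here, or Nivasch's Recurrence 5.13) the layer size must grow with $m$, roughly $t\approx\alpha_{d-1}(m)^{s-3}$; local symbols are bounded by recursion \emph{in $m$ at the same $d$ and $s$} (this is where the factor $\alpha_d(m)^{s-3}$ accumulates, one unit per level of nesting), the reduction $d\to d-1$ is reserved for symbols scattered over at least $R_s(d-1)$, resp.\ $D_s(d-1)$, layers after contraction, and the reductions $s\to s-1$, $s\to s-2$ apply to symbols concentrated in a single layer; the multiplicity thresholds of these categories are what add up to the recurrence for $R_s(d)$. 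Your pigeonhole claim that a symbol meeting many chunks ``forces an $(r,s)$-formation'' is also false as stated --- a formation needs $r$ distinct symbols, so scattered symbols must be controlled recursively, not by direct contradiction. Finally, the base cases are not free: $s=4$ needs a Hart--Sharir-type argument producing the single factor $\alpha_d(m)$ for every $d$ with threshold $R_4(d)=2d+1$, and $d=2$ needs a halving recurrence (as in Recurrence~\ref{rec:rec1}/Corollary~\ref{cor:dblffd2}) giving $m\log(m)^{s-3}$, not an ``iterated-logarithm'' bound; your sketch addresses neither, yet the recurrence you state only covers $s\ge 5$, $d\ge 3$.
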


The linear dependence of the upper bound on $r$ is not explicitly mentioned in~\cite{Nivasch10}, 
but can be revealed from the proof. In the base case, the dependence on $r$ is linear 
(Lemmas 5.9 and 5.10 in~\cite{Nivasch10}) and in Recurrences 5.11 and 5.13, the right-hand side can be 
rewritten as $r$ times an expression not depending on $r$.

It was shown~\cite{KlazarValtr,PettieDbl} that doubling letters in the forbidden subsequence usually 
has small impact on the maximum length of a generalized DS-sequence (see the definition in~\cite{Klazar92}).
Geneson~\cite{Geneson09} generalized 
the linear upper bound from the F\"uredi--Hajnal conjecture to forbidden double permutation matrices.
We show a similar behavior of formation-free sequences and matrices.
For $s\geq 2$, a set $S$ of $r(2s-2)$ $1$-entries forms a \emph{doubled $(r,s)$-formation} in $M$ 
if there exists an $s$-partition
of the rows and an $r$-tuple of columns each of which has one $1$-entry of $S$ in the top and bottom interval 
of rows of the partition and two $1$-entries in every other interval. 
A matrix $M$ is \emph{doubled $(r,s)$-formation-free} if it contains no doubled $(r,s)$-formation.
A \emph{$\DFF_{r,s,k}(m)$-matrix} is a doubled $(r,s)$-formation-free matrix with $m$ rows and at least
$k$ $1$-entries in every column.
Let $\Delta_{r,s,k}(m)$ be the maximum number of columns in a $\DFF_{r,s,k}(m)$-matrix.

In Corollary~\ref{cor:dblffhier} we show an analogue of Lemma~\ref{lem:nivasch514} 
for doubled $(r,s)$-formation-free matrices.
The proof follows the structure of the proof of Corollary~5.14 in~\cite{Nivasch10}. 
First, we show some simple bounds on $\Delta_{r,s,k}(m)$. 
The case $d=2$ of Corollary~\ref{cor:dblffhier} is proved in Corollary~\ref{cor:dblffd2}
by Recurrence~\ref{rec:rec1} and the remaining cases follow from Recurrence~\ref{rec:rec2}.
Corollary~\ref{cor:dblffhier} will give a sequence of upper bounds on $\Delta_{r,s,k}(m)$. 
Typically, the bounds are superlinear in $m$ for $r$, $s$ and $k$ fixed and the subsequence of
bounds applicable is limited by the values of $s$ and $k$. As $k$ grows (keeping $r$ and $s$ fixed) 
the best applicable bound gets closer and closer to linear. When one lets $k$ be a suitable 
function of $\alpha(m)$, the bound becomes linear in $m$.

If $m<k$, no matrix with $m$ rows can have $k$ $1$'s in every column.
\begin{observation}
\label{obs:dsmallm}
For every $r,s,k,m$, if $m<k$, then
\[
\Delta_{r,s,k}(m) = 0.
\]
\end{observation}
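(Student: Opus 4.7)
The plan is to argue directly from the definitions. A $\mathrm{DFF}_{r,s,k}(m)$-matrix has $m$ rows, so each of its columns contains at most $m$ entries in total, and therefore at most $m$ $1$-entries. On the other hand, by definition, every column of such a matrix must contain at least $k$ $1$-entries. If $m < k$, these two requirements are incompatible, so no column can be present.

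Concretely, I would suppose for contradiction that some $\mathrm{DFF}_{r,s,k}(m)$-matrix $M$ with $m < k$ has at least one column. Fix any column $c$ of $M$. The number of $1$-entries in $c$ is at most the number of rows of $M$, which is $m$, so it is strictly less than $k$. This contradicts the defining condition that every column carry at least $k$ $1$-entries. Hence $M$ has zero columns, and the maximum $\Delta_{r,s,k}(m)$ over all such matrices is $0$.

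No recursion, formation analysis, or counting machinery from Lemma~\ref{lem:nivasch514} is needed; the statement is a pigeonhole remark that merely rules out the degenerate regime $m < k$ so that later recurrences can assume $m \geq k$. There is no real obstacle — the only thing to be careful about is the convention on empty matrices, which is handled by the inequality ``at most $m$ $1$-entries per column'' being strictly less than the required $k$.
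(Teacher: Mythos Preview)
Your argument is correct and is exactly the paper's reasoning: the paper simply remarks, immediately before stating the observation, that ``if $m<k$, no matrix with $m$ rows can have $k$ $1$'s in every column,'' and gives no further proof.
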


\begin{observation}
For every $r,s,k,m$, if $k<2s-2$, then
\[
\Delta_{r,s,k}(m) = \infty.
\]
\end{observation}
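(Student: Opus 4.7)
The plan is to unpack the two definitions carefully and then exhibit a trivial construction. Recall that a doubled $(r,s)$-formation requires, in each of its $r$ participating columns, exactly $1+2(s-2)+1 = 2s-2$ specified $1$-entries (one in the top interval, two in each of the $s-2$ middle intervals, and one in the bottom interval). A $\mathrm{DFF}_{r,s,k}(m)$-matrix, on the other hand, only asks that each of its columns contains \emph{at least} $k$ $1$-entries, with no upper restriction on the number of columns.

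The key observation is that when $k<2s-2$, nothing in the definition prevents me from building columns that each contain \emph{exactly} $k$ $1$-entries. Such a column has strictly fewer than $2s-2$ $1$-entries in total, so it cannot supply the $2s-2$ $1$-entries demanded from each participating column of a doubled $(r,s)$-formation. Hence any matrix whose columns all contain exactly $k$ $1$-entries is automatically doubled $(r,s)$-formation-free, regardless of how the entries are placed or how many columns the matrix has.

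Concretely, for any positive integer $N$, and assuming $m\geq k$ (otherwise Observation~\ref{obs:dsmallm} already gives $\Delta_{r,s,k}(m)=0$ and the present statement is vacuously out of scope), I would take the $m\times N$ matrix whose every column has $1$-entries precisely in rows $1,2,\dots,k$ and $0$-entries elsewhere. By the previous paragraph this is a valid $\mathrm{DFF}_{r,s,k}(m)$-matrix, so $\Delta_{r,s,k}(m)\geq N$. Since $N$ was arbitrary, this forces $\Delta_{r,s,k}(m)=\infty$.

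There is essentially no technical obstacle here; the proof is a one-line counting comparison between $k$ and $2s-2$ followed by a trivial construction. The only subtle point worth flagging in a careful write-up is the edge case $m<k$, which is covered by the preceding observation, so the present statement is to be understood with the implicit side condition $m\geq k$.
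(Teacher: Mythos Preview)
Your argument is correct and is exactly the reasoning the paper has in mind; indeed the paper states the observation without proof, treating the counting comparison $k<2s-2$ versus the $2s-2$ entries needed per column in a doubled $(r,s)$-formation as self-evident. Your remark about the degenerate case $m<k$ is also apt: as written, the two observations formally conflict when $m<k<2s-2$, and the present statement should indeed be read under the implicit hypothesis $m\ge k$.
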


Analogously to~\cite[Lemma~5.10]{Nivasch10}, all the other values of $\Delta_{r,s,k}(m)$ are finite. 
\begin{observation}
\label{obs:drpolys}
For every $r \geq 2$, $s \geq 2$ and $m \geq 2s-2$
\[
\Delta_{r,s,2s-2}(m) \leq (r-1) \binom{m-s+1}{s-1} \leq r m^{s-1}.
\]
\end{observation}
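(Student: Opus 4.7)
The plan is to give each column a short canonical signature that records where a doubled $(r,s)$-formation through that column would have to be partitioned, and then argue that no signature can be used by $r$ distinct columns.

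For a column $c$, let $r_1(c)<r_2(c)<\dots<r_{2s-2}(c)$ be the row indices of the first $2s-2$ $1$-entries of $c$ (these exist since $c$ has at least $2s-2$ $1$'s). Define the signature
\[
\sigma(c):=\bigl(r_2(c),\,r_4(c),\,\dots,\,r_{2s-2}(c)\bigr),
\]
an $(s-1)$-tuple of row indices. The first step is to count how many signatures are possible. Consecutive entries differ by at least $2$ since $r_{2i}(c)\ge r_{2i-2}(c)+2$. Applying the shift $m'_i:=r_{2i-2}(c)-(i-2)$ turns $\sigma(c)$ into a strictly increasing $(s-1)$-tuple in $[m-s+1]$, so the number of possible signatures is at most $\binom{m-s+1}{s-1}$.

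The second (and main) step is to show that no signature is shared by $r$ columns. Suppose for contradiction that $r$ distinct columns $c_1,\dots,c_r$ all have the same signature $(m_2,\dots,m_s)$. Use this tuple as the partition boundaries: set interval $1$ to be $[1,m_2-1]$, interval $i$ to be $[m_i,m_{i+1}-1]$ for $2\le i\le s-1$, and interval $s$ to be $[m_s,m]$. By the definition of $\sigma(c_j)$, column $c_j$ has $r_1(c_j)$ in interval $1$; it has $r_{2i-2}(c_j)=m_i$ and $r_{2i-1}(c_j)\in[m_i,m_{i+1}-1]$ in interval $i$ for $2\le i\le s-1$, contributing two $1$-entries there; and it has $r_{2s-2}(c_j)=m_s$ in interval $s$. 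Selecting exactly these $2s-2$ entries from each of $c_1,\dots,c_r$ yields a doubled $(r,s)$-formation in $M$, contradicting the assumption that $M$ is doubled $(r,s)$-formation-free.

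Combining the two steps gives $\Delta_{r,s,2s-2}(m)\le(r-1)\binom{m-s+1}{s-1}$. For the final inequality, use the crude bound $\binom{m-s+1}{s-1}\le m^{s-1}/(s-1)!$ and $(r-1)/(s-1)!\le r$ (valid for $s\ge 2$) to conclude $(r-1)\binom{m-s+1}{s-1}\le r m^{s-1}$. There is no real obstacle here: the only point that needs slight care is checking that, with the partition boundaries chosen as $(m_2,\dots,m_s)$, each interval really does receive the prescribed number of $1$-entries from each $c_j$, which follows directly from how $\sigma(c_j)$ was defined.
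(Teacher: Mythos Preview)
Your proof is correct and follows essentially the same approach as the paper: both assign to each column the signature consisting of the row positions of its $2$nd, $4$th, $\dots$, $(2s-2)$th $1$-entries, observe that $r$ columns sharing a signature yield a doubled $(r,s)$-formation, and count signatures by $\binom{m-s+1}{s-1}$. Your write-up simply makes explicit the partition boundaries and the verification that the paper compresses into one sentence.
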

\begin{proof}
If each column in an $r$-tuple of columns has the same position of the $2$nd, $4$th, \dots, $(2s-2)$nd 
$1$-entry, then the first $2s-2$ $1$-entries from the columns form a doubled $(r,s)$-formation. 
\end{proof}

\begin{recurrence}
For every $r,k,m$ and $s \geq 3$
\label{rec:rec1}
\[
\Delta_{r,s,2k+1}(2m) \leq 2 \Delta_{r,s,2k+1}(m) + 2 \Delta_{r,s-1,k}(m).
\]
\end{recurrence}
\begin{proof}
As in the proof of \cite[Recurrence~5.11]{Nivasch10}, we cut the rows of a $\DFF_{r,s,2k+1}(2m)$-matrix 
into the upper $m$ rows and the lower $m$ rows. 
The \emph{local columns} are those with all $1$-entries in the same half of rows. 
There are at most $2\Delta_{r,s,2k+1}(m)$ local columns. Columns that are not local are \emph{global}.
Consider the submatrix $M'_1$ formed by the upper half of rows of global columns with at least half of their 
$1$'s in the upper half of rows. Let $M_1$ be the matrix created from $M'_1$ by removing the lowest $1$ 
in every column of $M'_1$. 
If $M_1$ contains a doubled $(r,s-1)$-formation, then $M$ contains a doubled $(r,s)$-formation.
Thus $M_1$ has at most $\Delta_{r,s-1,k}(m)$ columns.
A symmetric argument can be applied on the global columns with at least half of their 
$1$'s in the lower half of rows.
\end{proof}

\begin{corollary}
\label{cor:dblffd2}
For every fixed $s \geq 2$ and for all integers $r,k,m$ satisfying $k \geq 2^{s-1}+2^{s-2}-1$ we have
\[
\Delta_{r,s,k}(m) \leq \bar{c}_s r m \log(m)^{s-2},
\]
where $\bar{c}_s$ is a constant depending only on $s$.
\end{corollary}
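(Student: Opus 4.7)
The plan is to induct on $s$, with base case $s=2$ coming from Observation~\ref{obs:drpolys} and the inductive step driven by Recurrence~\ref{rec:rec1}.

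For the base case $s=2$, note that $2^{s-1}+2^{s-2}-1 = 2$, so the hypothesis $k \geq 2$ matches the hypothesis of Observation~\ref{obs:drpolys} (since $2s-2 = 2$). That observation already gives $\Delta_{r,2,k}(m) \leq \Delta_{r,2,2}(m) \leq rm$ (using monotonicity in $k$: requiring more ones per column is a stronger restriction, so $\Delta_{r,s,k}(m)$ is non-increasing in $k$). This is $\bar{c}_2 r m \log(m)^0$ with $\bar{c}_2 = 1$.

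For the inductive step, fix $s \geq 3$ and assume the bound for $s-1$ with constant $\bar{c}_{s-1}$. The threshold at level $s$ is $k_s := 2^{s-1}+2^{s-2}-1$, and at level $s-1$ it is $k_{s-1} = 2^{s-2}+2^{s-3}-1$. The crucial arithmetic is that $k_s = 2 k_{s-1} + 1$, so Recurrence~\ref{rec:rec1} applied with parameter $k_{s-1}$ reads
\[
\Delta_{r,s,k_s}(2m) \leq 2 \Delta_{r,s,k_s}(m) + 2 \Delta_{r,s-1,k_{s-1}}(m),
\]
and by the inductive hypothesis the second summand is at most $2 \bar{c}_{s-1} r m \log(m)^{s-3}$. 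Setting $f(m) := \Delta_{r,s,k_s}(m)/(rm)$, this becomes $f(2m) \leq f(m) + \bar{c}_{s-1}\log(m)^{s-3}$. Iterating along powers of two and using $\sum_{i=0}^{j-1} i^{s-3} = O(j^{s-2})$ yields $f(2^j) \leq C_s \cdot j^{s-2}$ for a constant $C_s$ depending only on $s$, i.e.\ $\Delta_{r,s,k_s}(m) \leq \bar{c}_s r m \log(m)^{s-2}$ whenever $m$ is a power of two. For arbitrary $m$, round up to the next power of two (losing at most a constant factor in $m$ and in $\log m$) and use monotonicity of $\Delta_{r,s,k}$ in $m$, which holds because any $\mathrm{DFF}_{r,s,k}$-matrix with $m$ rows remains one after padding with empty rows. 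For general $k \geq k_s$, apply monotonicity in $k$ to conclude $\Delta_{r,s,k}(m) \leq \Delta_{r,s,k_s}(m) \leq \bar{c}_s r m \log(m)^{s-2}$.

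The main bookkeeping obstacle is the compatibility of the threshold $k$ between the two levels: one must verify that the recurrence can actually be invoked at the right value $k_{s-1}$ so that the inductive hypothesis applies to the second summand. This is exactly what the choice $k_s = 2k_{s-1}+1$ guarantees, and it is the reason the constants $2^{s-1}+2^{s-2}-1$ appear in the statement. Apart from this, only routine handling of small $m$ (where $\log(m)$ may be zero or negative, which is absorbed by enlarging $\bar{c}_s$), of non-powers of two, and of the geometric-type sum in the recurrence is needed.
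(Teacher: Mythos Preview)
Your proof is correct and follows essentially the same approach as the paper: induction on $s$ with the base case $s=2$ coming from Observation~\ref{obs:drpolys} and the inductive step from Recurrence~\ref{rec:rec1} (the paper also invokes Observation~\ref{obs:dsmallm} for the small-$m$ base case, which you handle implicitly via $f(1)=0$). Your explicit verification that $k_s = 2k_{s-1}+1$, which explains the threshold $2^{s-1}+2^{s-2}-1$, and your solution of the recurrence by setting $f(m)=\Delta_{r,s,k_s}(m)/(rm)$ fill in details that the paper leaves to the reader.
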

\begin{proof}
The proof proceeds by induction on $s$ and $m$.
The base case of $s=2$ follows from Observation~\ref{obs:drpolys} and the cases with $m<k$ from 
Observation~\ref{obs:dsmallm}. Recurrence~\ref{rec:rec1} is used as the induction step.
\end{proof}

\begin{recurrence}
\label{rec:rec2}
For every nonnegative $r,m, k_1, k_2, k_3, k_4$ and $t$ satisfying $m>t$, $k_1 \geq k_2+1 \geq 2$ 
and $k_4 \geq k_3 \geq 3$, if we let $k = 2 k_1  + (k_2+1)(k_3-3) + (k_4-k_3) + 1$, then
\begin{align*}
\Delta_{r,s,k}(m) \leq & \left( 1+\frac{m}{t} \right) 
(\Delta_{r,s,k}(t) + 2\Delta_{r,s-1,k_1}(t) + \Delta_{r,s-2,k_2}(t)) + \\
& + \Pi'_{r,s,k_3}\left( 1+\frac{m}{t} \right) + \Delta_{r,s,k_4}\left( 1+\frac{m}{t} \right) 
\qquad \textrm{for $s \geq 4$ and} \\
\Delta_{r,s,k}(m) \leq & \left( 1+\frac{m}{t} \right) 
(\Delta_{r,s,k}(t) + 2\Delta_{r,s-1,k_1}(t) + r-1) + \\
& + \Pi'_{r,s,k_3}\left( 1+\frac{m}{t} \right) + \Delta_{r,s,k_4}\left( 1+\frac{m}{t} \right) 
\qquad  \textrm{for $s = 3$}.
\end{align*}
\end{recurrence}

\begin{proof}
Consider a $\DFF_{r,s,k}(m)$-matrix $M$.
We partition the rows of $M$ into $b:=\lceil m/t \rceil \leq m/t+1$ layers
$L_1, \dots, L_b$ of at most $t$ consecutive rows each.

A column is
\begin{itemize}
\item \emph{local} in layer $L_i$ if all its $1$'s appear in layer $L_i$,
\item \emph{top-concentrated} in layer $L_i$ if it has at least $k_1 + 1$ $1$'s in layer $L_i$ and at 
least one $1$-entry below $L_i$,
\item \emph{bottom-concentrated} in layer $L_i$ if it has at least $k_1 + 1$ $1$'s in layer $L_i$ and at 
least one $1$-entry above $L_i$,
\item \emph{middle-concentrated} in layer $L_i$ if it has at least $k_2 + 2$ $1$'s in layer $L_i$ and at 
least one $1$-entry above and one below layer $L_i$,
\item \emph{doubly-scattered} if it has at least two $1$'s in at least $k_3$ layers,
\item \emph{scattered} if it has a $1$-entry in at least $k_4$ layers.
\end{itemize}

These categories are analogous to those used by Nivasch~\cite{Nivasch10}, except that we added the category
of doubly-scattered columns. This allows us to use $\Pi'$ instead of $\Delta$ in one summand of the recurrence.
As one of the consequences, when $s\geq 6$, the upper bound on the maximum number of $1$'s in a doubled 
$(r,s)$-formation-free $n \times n$ matrix in Lemma~\ref{lem:fatff} is similar to the best known upper 
bound on $F_{r,s}(n)$, although it is closer to $F_{r,s+1}(n)$ when $s=3$.

Every column falls into one of these categories. If a column is in none of them, then its number of $1$'s 
is maximized when it has $k_1$ $1$'s in its top and bottom nonzero layers, $k_2+1$ $1$'s in some other $k_3-3$ 
layers and a single $1$ in some additional $k_4-k_3$ layers. Thus it contains only at most
$2 k_1 + (k_2+1)(k_3-3) + (k_4-k_3) \leq k-1$ $1$-entries.

For each layer $L_i$, the number of columns local in $L_i$ is at most $\Delta_{r,s,k}(t)$. 
For every fixed $i$ we consider the columns that are top-concentrated in $L_i$ and let $M'_i$ be 
the submatrix of $M$ defined by these columns and the rows of $L_i$. Let $M_i$ be obtained from
$M'_i$ by removing the lowest $1$-entry from every column. If $M_i$ contains a doubled $(r,s-1)$-formation,
then $M$ contains a doubled $(r,s)$-formation. Thus there are at most $\Delta_{r,s-1,k_1}(t)$ columns
top-concentrated in $L_i$. Similarly, there are at most $\Delta_{r,s-1,k_1}(t)$ columns
bottom-concentrated in $L_i$. For $s \geq 4$, there at most $\Delta_{r,s-2,k_2}(t)$ columns 
middle-concentrated in $L_i$.
For $s=3$, there are at most $r-1$ columns middle concentrated in $L_i$, because an $r$-tuple of columns
with two $1$'s in layer $L_i$ and at least one $1$ above and one below contains a doubled $(r,3)$-formation.

To bound the number of doubly-scattered columns, we \emph{contract} each layer into a single row. 
That is, we write $1$ for every column containing at least two $1$'s in the layer and $0$ otherwise.
If there is an $(r,s)$-formation on the contracted doubly-scattered columns, then $M$ contains 
a doubled $(r,s)$-formation. Thus, by Lemma~\ref{lem:form2spl}, there are at most 
$\Pi'_{r,s,k_3}(\lceil m/t \rceil)$ doubly-scattered columns.
By a similar argument, the number of scattered columns is at most $\Delta_{r,s,k_4}(\lceil m/t \rceil)$. The only 
difference is that while contracting, we write $1$ for the columns containing at least one $1$ in the 
layer.
\end{proof}

Similarly to Nivasch's functions $R_s(d)$, we define a hierarchy of functions $D_s(d)$, where $s\geq 1$
and $d \geq 2$, as follows:
$D_1(d)=0$, $D_2(d)=2$, $D_s(2)=2^{s-1}+2^{s-2}-1$ and
when $s,d \geq 3$
\[
D_s(d) = 2 D_{s-1}(d) + (D_{s-2}(d)+1)(R_s(d-1)-3) + D_s(d-1)-R_s(d-1) + 1.
\]
Then
\begin{align*}
D_3(d) &= 2d+1, \qquad D_4(d) \le O(d^2), \qquad D_5(d) \le O(d 2^d), \\
D_{2t+3}(d) &\le 2^{(1/t!)d^t + O(d^{t-1})} \quad \textrm{for $t\geq 2$ and} \quad
D_{2t+4}(d) \le 2^{(1/t!)d^t \log(d) + O(d^t)} \quad \textrm{for $t\geq 1$.} \\
\end{align*}

\begin{corollary}
\label{cor:dblffhier}
For every $d\geq 2$, $s \geq 2$, $r\geq 2$, $m$ and $k$ satisfying $m \geq k \geq D_s(d)$ we have
\[
\Delta_{r,s,k}(m) \leq c_s r m \alpha_d(m)^{s-2},
\]
where $c_s$ is a constant depending only on $s$.
\end{corollary}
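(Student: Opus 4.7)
The plan is to prove the bound by nested induction: the outer induction is on $d$, the middle one on $s$, and a further induction on $m$ handles the self-recursive term of Recurrence~\ref{rec:rec2}. The base case $d=2$ follows from Corollary~\ref{cor:dblffd2}, since $\alpha_2(m) = \lceil m/2 \rceil \ge \log_2 m$, so the estimate $\bar{c}_s r m \log(m)^{s-2}$ fits into $c_s r m \alpha_2(m)^{s-2}$. The base case $s=2$ is Observation~\ref{obs:drpolys}: $\Delta_{r,2,k}(m) \le r m = r m \alpha_d(m)^0$. Instances of $m$ below a constant (depending on $s$ and $d$) are absorbed into $c_s$.

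For the inductive step with $d \ge 3$ and $s \ge 3$, I apply Recurrence~\ref{rec:rec2} with parameters
\[
t := \alpha_{d-1}(m),\ \ k_1 := D_{s-1}(d),\ \ k_2 := D_{s-2}(d),\ \ k_3 := R_s(d-1),\ \ k_4 := D_s(d-1).
\]
The definition of $D_s(d)$ is precisely $2k_1 + (k_2+1)(k_3-3) + (k_4-k_3) + 1$, so Recurrence~\ref{rec:rec2} is applicable. The self-recursive contribution $(1+m/t)\,\Delta_{r,s,k}(t)$ is bounded by induction on $m$; using the key identity $\alpha_d(\alpha_{d-1}(m)) = \alpha_d(m)-1$, this is at most $c_s r (m+t)(\alpha_d(m)-1)^{s-2}$, whose binomial expansion produces the desired main term $c_s r m \alpha_d(m)^{s-2}$ together with a negative slack of order $(s-2)\, c_s r m \alpha_d(m)^{s-3}$ that must absorb all remaining contributions.

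The layer-localized columns $(1+m/t)\bigl(2\Delta_{r,s-1,k_1}(t) + \Delta_{r,s-2,k_2}(t)\bigr)$ (with $r-1$ in place of $\Delta_{r,1,k_2}(t)$ when $s=3$) are handled by the induction on $s$ and contribute $O(r m \alpha_d(m)^{s-3})$. The doubly-scattered term $\Pi'_{rs,s,k_3}(1+m/t)$ is controlled by Lemma~\ref{lem:nivasch514} with index $d-1$ (which applies because $k_3 = R_s(d-1)$) and yields $O(r m\, \alpha_{d-1}(m)^{s-4})$, since $\alpha_{d-1}(m/t) \le \alpha_{d-1}(m) = t$. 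The scattered term $\Delta_{r,s,k_4}(1+m/t)$ is bounded by the outer induction on $d$ (applicable because $k_4 = D_s(d-1)$) and yields $O(r m \alpha_{d-1}(m)^{s-3})$. Both these last two contributions are dominated by the slack.

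Collecting everything gives
\[
\Delta_{r,s,k}(m) \le c_s r m \alpha_d(m)^{s-2} - (s-2)\, c_s r m \alpha_d(m)^{s-3} + O\bigl(r m \alpha_d(m)^{s-3}\bigr),
\]
which closes the induction once $c_s$ is chosen larger than the implicit constants from the error terms. The main obstacle is the coordination of constants: the slack is linear in $c_s$, whereas the error terms involve only $c_{s-1}$, $c_{s-2}$ (fixed before $c_s$) and Nivasch's constant $c'_s$, so choosing $c_s$ sufficiently large is possible. One has to verify that the dependence on $r$ stays linear throughout (ensured by linearity of Lemma~\ref{lem:nivasch514} in $r$ and of the inductive hypotheses), that the base-case absorption is uniform in $d$ and $s$, and that the $s=3$ substitution of $r-1$ for $\Delta_{r,1,k_2}(t)$ in Recurrence~\ref{rec:rec2} does not spoil the bookkeeping; the last point is immediate since $r-1 \le r$ and $\alpha_d(m)^{s-3} = \alpha_d(m)^0 = 1$ when $s=3$.
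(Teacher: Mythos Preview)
Your inductive scheme and choice of $k_1,k_2,k_3,k_4$ match the paper's, but your choice $t=\alpha_{d-1}(m)$ is too small and the induction does not close. The problem is the scattered term $\Delta_{r,s,k_4}(1+m/t)$: by the outer induction on $d$ it is bounded by $c_s\, r\,(1+m/t)\,\alpha_{d-1}(1+m/t)^{s-2}$, which with your $t$ is of order $c_s\, r m\,\alpha_{d-1}(m)^{s-3}$. Two things go wrong. First, for $s\ge 4$ the quantity $\alpha_{d-1}(m)^{s-3}$ is not $O(\alpha_d(m)^{s-3})$ with an implied constant depending only on $s$ (take $d=3$: $\alpha_2(m)\approx\log m$ versus $\alpha_3(m)\approx\log^*m$), so it cannot be absorbed by your slack $(s-2)\,c_s\, r m\,\alpha_d(m)^{s-3}$. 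The same issue hits the doubly-scattered term once $s\ge 5$. Second, and more fatally, the scattered term carries the constant $c_s$ itself, not $c_{s-1}$ or $c'_s$; your claim that ``the error terms involve only $c_{s-1},c_{s-2}$ and $c'_s$'' is false for this term, so enlarging $c_s$ does not help. Even at $s=3$ the scattered contribution is already $\approx 2c_s r m$, which exceeds the available slack $c_s r m$.

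The paper fixes this by taking $t$ of order $\alpha_{d-1}(m)^{s-2}$ rather than $\alpha_{d-1}(m)$; then $(m/t)\,\alpha_{d-1}(m)^{s-2}=O(m)$, so both the scattered and doubly-scattered terms become bounded by a fixed fraction of $c_s r m$ and do fit into the slack. The price is that the identity $\alpha_d(\alpha_{d-1}(m))=\alpha_d(m)-1$ no longer handles the self-recursive term; the paper compensates by introducing modified functions $\bar\alpha_{d,s}$ whose recursion is tailored to $t\approx\alpha_{d-1}(m)^{s-2}$ and which differ from $\alpha_d$ only by $s$-dependent additive constants. (Minor aside: $\alpha_2(m)\approx\log_2 m$, not $\lceil m/2\rceil$; this does not affect your base case but is worth correcting.)
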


\begin{proof}
The proof proceeds by induction on $d, s$ and $m$ similarly to the proof of 
\cite[Corollary~4.12]{Nivasch10}.
In the case $s=2$, we apply Observation~\ref{obs:drpolys} and so the lemma holds with $c_2=1$.
For every $s \geq 3$ let $m_0(s)$ be a constant such that
\[
m \geq 1 + (6s)^s \lceil \log_2(m)\rceil^{s^2} \qquad \textrm{for every $m \geq m_0(s)$}.
\]
Let $\widehat{c}_1=\widehat{c}_2=1$ and for $s \geq 3$ we define $\widehat{c}_s$ in the order of increasing $s$ as
\[
\widehat{c}_s := \max\{c'_s, \bar{c}_s, 9 \widehat{c}_{s-1}, 9 \widehat{c}_{s-2}, m_0(s)^{s-1}\},
\]
where $\bar{c}_s$ is the constant from Corollary~\ref{cor:dblffd2} and $c'_s$ is the constant
from Lemma~\ref{lem:nivasch514}.
For every $s \geq 3$ and $d\geq 2$, we define a function $\bar{\alpha}_{d,s}$ by 
$\bar{\alpha}_{2,s}(m) = \lceil \log(m) \rceil$, $\bar{\alpha}_{d,s}(m) = 1$ if $m \leq m_0(s)$ and 
\[
\bar{\alpha}_{d,s}(m) = 1 + \bar{\alpha}_{d,s}(6 s \bar{\alpha}_{d-1,s}(m)^{s-2}) \qquad \textrm{otherwise.}
\]
Then $\bar{\alpha}_{d,s}(m)$ is well defined and differs by at most an additive constant (depending on $s$) 
from the values of the $d$th inverse Ackermann function $\alpha_d(m)$ for all $s$, $d$ and $m$ 
(this can be shown similarly to~\cite[Appendix~C]{Nivasch10}). 
The functions also satisfy $\bar{\alpha}_{d,s}(m) \geq  \bar{\alpha}_{d,s-1}(m)$.
It is thus enough to prove
\[
\Delta_{r,s,k}(m) \leq \widehat{c}_s rm \bar{\alpha}_{d,s}(m)^{s-2}.
\]
The case $d=2$ follows from Corollary~\ref{cor:dblffd2}. 
The cases $m\leq m_0(s)$ follow from Observation~\ref{obs:drpolys}.
Now $s \geq 3$, $d \geq 3$ and $m > m_0(s)$. We apply Recurrence~\ref{rec:rec2} with:
\begin{align*}
k_1 & = D_{s-1}(d), \qquad k_2 = D_{s-2}(d), \qquad k_3 = R_{s}(d-1),  \\
k_4 & = D_{s}(d-1), \qquad k =D_s(d) \qquad \textrm{and} \qquad t = 6 s \bar{\alpha}_{d-1,s}(m)^{s-2}.
\end{align*}
By the induction hypothesis,
\begin{align*}
2\Delta_{r,s-1,k_1}(t) + \Delta_{r,s-2,k_2}(t) 
&\leq r \frac{\widehat{c}_s}{3} t \bar{\alpha}_{d,s}(m)^{s-3} \qquad \textrm{when $s \geq 4$,} \\
2\Delta_{r,s-1,k_1}(t) + r-1 
&\leq r \frac{\widehat{c}_s}{3} t \bar{\alpha}_{d,s}(m)^{s-3} \qquad \textrm{when $s = 3$,} \\
\Delta_{r,s,k_4}\left( 1+\frac{m}{t} \right) 
&\leq \widehat{c}_s r \frac{2m}{t} \bar{\alpha}_{d-1,s}(m)^{s-2}
\leq r \frac{\widehat{c}_s}{3s} \leq r \frac{\widehat{c}_s}{9} m \bar{\alpha}_{d,s}(m)^{s-3} 
~\textrm{for $s \geq 3$}
\end{align*}
and by Lemma~\ref{lem:nivasch514},
\[
\Pi'_{r,s,k_3}\left( 1+\frac{m}{t} \right) 
\leq r s \widehat{c}_s \frac{2 m}{t}  \bar{\alpha}_{d-1,s}(m)^{s-3} 
\leq r \frac{\widehat{c}_s}{3} m 
\leq r \frac{\widehat{c}_s}{3} m \bar{\alpha}_{d,s}(m)^{s-3}.
\]
Substituting into Recurrence~\ref{rec:rec2} we get
\begin{align*}
\Delta_{r,s,k}(m) & \leq \frac{m}{t} \Delta_{r,s,k}(t) + \Delta_{r,s,k}(t) + 
(m+t) r \frac{\widehat{c}_s}{3} \bar{\alpha}_{d,s}(m)^{s-3} + \frac{4\widehat{c}_s}{9} r m \bar{\alpha}_{d,s}(m)^{s-3} \\
& \leq \frac{m}{t} \Delta_{r,s,k}(t) + \frac{7 \widehat{c}_s}{9} r m \bar{\alpha}_{d,s}(m)^{s-3} + 
\Delta_{r,s,k}(t) + \frac{\widehat{c}_s}{3} r t \bar{\alpha}_{d,s}(m)^{s-3}.
\end{align*}
By Observation~\ref{obs:drpolys}, $\Delta_{r,s,k}(t) \leq r t^{s-1} \leq r (6 s \bar{\alpha}_{d-1,s}(m)^{s-2})^{s-1}$,
which is at most $rm$, because $m \geq m_0(s)$. So $\Delta_{r,s,k}(t) \leq \widehat{c}_s rm/9$.  
Similarly $t \bar{\alpha}_{d,s}(m)^{s-3} \leq m/3$. Thus
\begin{align*}
\Delta_{r,s,k}(m) & \leq \frac{m}{t} \Delta_{r,s,k}(t) + \frac{7 \widehat{c}_s}{9} r m \bar{\alpha}_{d,s}(m)^{s-3} 
+ \frac{\widehat{c}_s}{9}rm + \frac{\widehat{c}_s}{9} r m \\
& \leq \frac{m}{t} \Delta_{r,s,k}(t) + \widehat{c}_s rm \bar{\alpha}_{d,s}(m)^{s-3} \\
& \leq \frac{m}{t} \widehat{c}_s rt \bar{\alpha}_{d,s}(t)^{s-2} + \widehat{c}_s rm \bar{\alpha}_{d,s}(m)^{s-3} 
\qquad \textrm{by the induction hypothesis} \\
& \leq m \widehat{c}_s r \cdot ((\bar{\alpha}_{d,s}(m)-1)^{s-2} + \bar{\alpha}_{d,s}(m)^{s-3}) \\
& \leq \widehat{c}_s rm \bar{\alpha}_{d,s}(m)^{s-2}.
\qedhere
\end{align*}
\end{proof}

Let $\beta_s(m) := D_s(\alpha(m))$.

\begin{corollary}
\label{cor:dblff}
An $m \times n$ matrix $M$ with at least $\beta_s(m)$ $1$-entries in every column contains 
a doubled $(\lfloor (n-1)/(m c'_s)\rfloor, s)$-formation, where $c'_s$ is a constant depending only on $s$.
\end{corollary}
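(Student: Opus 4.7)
The plan is to derive Corollary~\ref{cor:dblff} as a straightforward contrapositive of Corollary~\ref{cor:dblffhier}. Suppose for contradiction that the matrix $M$ contains no doubled $(r,s)$-formation, where I set $r := \lfloor (n-1)/(m c'_s) \rfloor$. Then, by the hypothesis that every column of $M$ has at least $\beta_s(m) = D_s(\alpha(m))$ $1$-entries, $M$ qualifies as a $DFF_{r,s,\beta_s(m)}(m)$-matrix in the sense of Section~\ref{sub:fff}, and consequently the number of its columns satisfies $n \le \Delta_{r,s,\beta_s(m)}(m)$.

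The key step is to apply Corollary~\ref{cor:dblffhier} with the distinguished choices $d := \alpha(m)$ and $k := \beta_s(m) = D_s(d)$. The condition $k \ge D_s(d)$ holds with equality, and the condition $m \ge k$ may be assumed without loss of generality, since otherwise no column of $M$ can contain $\beta_s(m)$ $1$-entries and the statement is vacuous. By the defining property of the inverse Ackermann function, $\alpha_{\alpha(m)}(m) \le 3$, so the corollary yields
\[
\Delta_{r,s,\beta_s(m)}(m) \;\le\; c_s\, r\, m\, \alpha_{\alpha(m)}(m)^{s-2} \;\le\; c_s\, 3^{s-2}\, r\, m.
\]

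Setting $c'_s := c_s\, 3^{s-2}$, which depends only on $s$, and combining with the earlier bound yields $n \le c'_s\, r\, m$. On the other hand, the very choice $r = \lfloor (n-1)/(m c'_s) \rfloor$ directly forces $c'_s\, r\, m \le n - 1 < n$, producing the desired contradiction. The only remaining nuisance is the constraint $r \ge 2$ carried over from Corollary~\ref{cor:dblffhier}: when $\lfloor (n-1)/(m c'_s) \rfloor \in \{0,1\}$, the claim is trivially true, since a doubled $(0,s)$-formation is empty and, because $D_s(\alpha(m)) \ge 2s-2$ in the relevant regime, any single column of $M$ already realizes a doubled $(1,s)$-formation via a suitable $s$-partition of the rows. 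Thus the entire content of Corollary~\ref{cor:dblff} is packaged in the inequality $\alpha_{\alpha(m)}(m) \le 3$, and the main combinatorial work has already been carried out inside the proof of Corollary~\ref{cor:dblffhier}.
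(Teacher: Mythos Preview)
Your proof is correct and follows essentially the same approach as the paper: apply Corollary~\ref{cor:dblffhier} with $d=\alpha(m)$, use $\alpha_{\alpha(m)}(m)\le 3$ to bound $\Delta_{r,s,\beta_s(m)}(m)$ by $c'_s r m$, and derive a contradiction from the choice of~$r$. The only differences are cosmetic: the paper takes $c'_s = c_s 3^{s-3}$ (an apparent typo, since the exponent in Corollary~\ref{cor:dblffhier} is $s-2$, as you use), and the paper silently ignores the degenerate cases $r\le 1$ and $m<k$ that you handle explicitly.
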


\begin{proof}
Let $c'_s = c_s 3^{s-3}$, where $c_s$ is the constant from Corollary~\ref{cor:dblffhier} and let 
$r = \lfloor (n-1)/(m c'_s)\rfloor$.
If $M$ did not contain a doubled $(r,s)$-formation, its number of columns would be, by Corollary~\ref{cor:dblffhier} with $d=\alpha(m)$, at most 
\[
c_s r m \alpha_{\alpha(m)}(m)^{s-3} \leq r m c_s 3^{s-3} = \lfloor (n-1)/(m c'_s)\rfloor m c'_s < n.
\qedhere
\]
\end{proof}

A set $S$ of $Brs$ $1$-entries forms a \emph{$B$-fat $(r,s)$-formation} in $M$ if there exists an $s$-partition
of the rows and an $r$-tuple of columns each of which has $B$ $1$-entries of $S$ in each interval. 
A matrix $M$ is \emph{$B$-fat $(r,s)$-formation-free} if it contains no $B$-fat $(r,s)$-formation.

We now prove a more precise version of Lemma~\ref{lem:fatff}.

\begin{lemma}
\label{lem:fatffprecise}
For all positive integers $m,n,s$ and $B$, an $m \times n$ matrix $M$ 
with at least $2(\beta_s(m)+2)Bn$ $1$-entries contains
a $B$-fat $(\lfloor nB/(m c_s)\rfloor, s)$-formation, where $c_s$ is a constant depending only on $s$.
\end{lemma}

\begin{proof}
We transform the given matrix $M$ to a matrix $\overline{M}$ with the same number of $1$-entries in every column
using the idea from the proof of Lemma~4.1 from~\cite{Nivasch10}.
Let $v(q)$ be the number of $1$-entries in a column $q$ of $M$. In every column $q$, we split the $1$-entries 
into chunks of consecutive $(\beta_s(m)+2)B$ $1$'s. The last less than $(\beta_s(m)+2)B$ $1$'s are discarded. 
Each of the chunks gets its own column with $1$-entries in the rows where the $1$-entries of the chunk lie. 
These columns form the matrix $\overline{M}$. 
Note that the order in which the columns are placed to $\overline{M}$ is not important. 
Because we discarded at most $(\beta_s(m)+2)Bn$ $1$'s and every column of 
$\overline{M}$ has exactly $(\beta_s(m)+2)B$ $1$'s, $\overline{M}$ has at least $n$ columns.
Observe that for every $r$ and $s$ if $\overline{M}$ contains a $B$-fat $(r,s)$-formation, then so does $M$.

We consider only the first $n$ columns of $\overline{M}$. We also remove at most $B-1$ rows so as to have the 
number of rows divisible by $B$. We still have at least $(\beta_s(m)+1)B$ $1$'s in every column. 
In each column $q$, we select a set $S$ of $1$'s such
that none of them is among the first or the last $B-1$ $1$'s of the column $q$ and there are at least $B-1$
$1$'s between every two $1$'s of $S$. We take $S$ of size $\beta_s(m)$ and remove all the other $1$'s in $q$.
The rows of $\overline{M}$ are now grouped into intervals of rows $\{iB+1, \dots, (i+1)B\}$. By the choice of $S$,
every column contains at most one $1$-entry in every interval. We obtain $\overline{\overline{M}}$ by 
contracting each of the intervals of rows into a single row. 

The matrix $\overline{\overline{M}}$ has $\lfloor m/B \rfloor$ rows and $n$ columns, each of them having
$\beta_s(m)$ $1$'s. It thus contains a doubled $(\lfloor (n-1)B/(m c'_s)\rfloor, s)$-formation by 
Corollary~\ref{cor:dblff}. By the choice of $S$, this implies that $\overline{M}$ and consequently $M$ 
contain a $B$-fat $(\lfloor (n-1)B/(m c'_s)\rfloor, s)$-formation.
\end{proof}

\begin{proof}[Proof of Lemma~\ref{lem:fatff}]
Let $\zeta_s(m) = 2(\beta_s(m)+2) \max\{1, c_s\}$, where $c_s$ is the constant from Lemma~\ref{lem:fatffprecise}.
Let $M$ be an $m \times n$ matrix with at least $\zeta_s(m) B n$ $1$-entries.
By Lemma~\ref{lem:fatffprecise}, $M$ contains a $B$-fat $(\lfloor nB/m\rfloor, s)$-formation.
\end{proof}

\subsection{Sets of permutations with bounded VC-dimension}
\label{sub:uperm}
In this section we prove Theorem~\ref{thm:vcub}.
It will be more convenient for the proof to substitute the permutations by their corresponding permutation matrices. 
That is, we have a set $\mathcal{P}$ of $n$-permutation matrices and for every $(k+1)$-tuple $(a_1, \dots, a_{k+1})$ 
of columns, there is a forbidden $(k+1)$-permutation matrix $S_{a_1,\dots,a_{k+1}}$.

Let $M_{\mathcal{P}}$ be a $(0,1)$-matrix with $1$-entries on the positions where at least one matrix 
from $\mathcal{P}$ has a $1$-entry. 
Let $|M|$ denote the number of $1$-entries in a $(0,1)$-matrix $M$ and let 
$v(\mathcal{P})=v(M_{\mathcal{P}})=|M_{\mathcal{P}}|/n$.
Similarly to Raz's proof of the exponential upper bound on $r_2(n)$~\cite{Raz00}, we will remove matrices 
from $\mathcal{P}$ until we decrease $v(\mathcal{P})$ below some threshold $T(n)$.
When $v(\mathcal{P}) \leq T(n)$, then $|\mathcal{P}| \leq T(n)^n$ since 
the number of permutation matrices contained in $M_{\mathcal{P}}$ is bounded from above by the maximum of a product 
of $n$ numbers with sum $v(\mathcal{P}) n$.

Let $\gamma_k(n) = 2 (k+1)! \zeta_{k+1}(n)$, where $\zeta_{k+1}(n)$ is the function from 
Lemma~\ref{lem:fatff}. 

\begin{lemma} 
\label{lem:iter}
Let $\mathcal{P}$ be a set of $n$-permutation matrices with VC-dimension $k$ such that 
$v(\mathcal{P}) \geq 2 \gamma_k(n)$. Then there is a set $\mathcal{P}' \subset \mathcal{P}$ satisfying
\begin{align*}
v(\mathcal{P}') & \leq v(\mathcal{P}) - \frac{v(\mathcal{P})^2}{\gamma_k^2(n) n} \\
|\mathcal{P}'| & \geq \frac{|\mathcal{P}|}{2 v(\mathcal P)^{k}}.
\end{align*}
\end{lemma}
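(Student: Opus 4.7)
My plan is to apply Lemma~\ref{lem:fatff} to $M_\mathcal{P}$ and then use the VC-dimension hypothesis inside the resulting fat formation to locate one-entries that are cheap to delete. Concretely, I would set $s := k+1$ and $B := \lfloor v(\mathcal{P})/(2(\beta_{k+1}(n)+2)) \rfloor$, so that $|M_\mathcal{P}| = v(\mathcal{P}) n \ge 2(\beta_{k+1}(n)+2)Bn$ triggers Lemma~\ref{lem:fatff} and produces a $B$-fat $(r, k+1)$-formation in $M_\mathcal{P}$ with $r = \lfloor (n-1)B/(nc'_{k+1}) \rfloor$. The definition $\gamma_k(n) = 4(\beta_{k+1}(n)+2)c'_{k+1}(k+1)!$ combined with the hypothesis $v(\mathcal{P}) \ge 2\gamma_k(n)$ is calibrated so that both $B$ and $r$ comfortably exceed $(k+1)!$.

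Next, I would fix any $(k+1)$-tuple of columns $c_1 < \cdots < c_{k+1}$ of the formation with row-intervals $I_1, \dots, I_{k+1}$, and let $S$ denote the forbidden $(k+1)$-permutation on this tuple supplied by the VC-dimension hypothesis. Set $Q_i := \{\pi \in \mathcal{P} : \pi(c_i) \in I_{S(i)}\}$. I claim $\bigcap_{i=1}^{k+1} Q_i = \emptyset$: a permutation $\pi$ in this intersection would have its rows $\pi(c_1), \dots, \pi(c_{k+1})$ lying in $I_{S(1)}, \dots, I_{S(k+1)}$ respectively, and since the intervals are ordered along the rows, this forces the restriction of $\pi$ to $(c_1, \dots, c_{k+1})$ to be exactly the pattern $S$, contradicting the choice of $S$. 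Hence $\sum_{i=1}^{k+1} |Q_i| \le k|\mathcal{P}|$, so some $Q_{i^*}$ has size at most $k|\mathcal{P}|/(k+1)$, and deleting $Q_{i^*}$ from $\mathcal{P}$ removes all (at least $B$) one-entries that $M_\mathcal{P}$ has in the cell $(c_{i^*}, I_{S(i^*)})$.

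A single cheap-cell elimination only removes about $v(\mathcal{P})/\gamma_k(n)$ one-entries, short of the required $v(\mathcal{P})^2/\gamma_k^2(n)$ by a factor of $v(\mathcal{P})/\gamma_k(n)$. I would close this gap by iterating inside the lemma: either cycling through disjoint $(k+1)$-subtuples of the $r$ columns of the formation, or reapplying Lemma~\ref{lem:fatff} to the shrinking matrix after each round, repeating the cheap-cell step roughly $v(\mathcal{P})/\gamma_k(n)$ times. Across so many rounds, the density should drop by the advertised amount $v(\mathcal{P})^2/(\gamma_k^2(n) n)$, while the surviving subfamily $\mathcal{P}'$ should still satisfy $|\mathcal{P}'| \ge |\mathcal{P}|/(2v(\mathcal{P})^{2k})$.

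The main obstacle I foresee is bounding the cumulative loss in $|\mathcal{P}|$ by the target factor $2v(\mathcal{P})^{2k}$. The naive cheap-cell argument loses a constant fraction of $\mathcal{P}$ per round, and across $\sim v(\mathcal{P})/\gamma_k(n)$ rounds this would far exceed the allowed loss. Recovering the polynomial factor $v(\mathcal{P})^{2k}$ seems to require replacing the coarse interval-based profile $j_\pi$ by a finer partition that records the exact row $\pi(c_i)$; since each column of $M_\mathcal{P}$ carries only $v(\mathcal{P})$ one-entries on average, each $(k+1)$-tuple of columns contributes at most $v(\mathcal{P})^{k+1}$ classes under this partition, and the $2k$ in the exponent likely arises by combining two independent choices (for instance, aggregating data from two $(k+1)$-subtuples of the formation, or splitting the rounds into two stages each contributing a $v(\mathcal{P})^k$ factor). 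Reconciling the number of rounds needed with this refined exponent, and verifying that removed entries add up across stages rather than overlap, is the most delicate part of the argument.
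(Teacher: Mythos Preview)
Your setup matches the paper: apply Lemma~\ref{lem:fatff} with $s=k+1$ and the stated $B$ to obtain a $B$-fat $(r,k+1)$-formation on a column set $C$ with row intervals $R_1,\dots,R_{k+1}$, where $r$ is of order $v(\mathcal P)/\gamma_k(n)$. You also correctly diagnose the obstacle: your cheap-cell step discards a constant fraction of $\mathcal P$ each time, so over $\Theta(v(\mathcal P)/\gamma_k(n))$ rounds the loss is exponential in $v(\mathcal P)/\gamma_k(n)$, far exceeding the target factor $2v(\mathcal P)^{2k}$. Neither of your proposed fixes (cycling through disjoint $(k+1)$-tuples, or refining to exact-row profiles and splitting into two stages) closes this gap.

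The missing device is what the paper calls a \emph{criss-crossed} set. For $Q\subseteq C$ with $|Q|=t$ and an injection $I\colon Q\to\{R_1,\dots,R_{k+1}\}$, write $\mathcal P_I$ for the permutations in $\mathcal P$ whose $1$-entry in each $q\in Q$ lies in $I(q)$. Call $Q$ criss-crossed if $|\mathcal P_I|\ge |\mathcal P|/v(\mathcal P)^{2t}$ for \emph{every} such $I$. A criss-crossed $(k+1)$-tuple would realise all $(k+1)$-patterns, contradicting VC-dimension $k$; hence a maximal criss-crossed $Q$ has size $t\le k$. Now every $u\in C\setminus Q$ admits an injection $J_u$ on $Q\cup\{u\}$ with $|\mathcal P_{J_u}|<|\mathcal P|/v(\mathcal P)^{2t+2}$, while its restriction $I_u:=J_u|_Q$ still has $|\mathcal P_{I_u}|\ge |\mathcal P|/v(\mathcal P)^{2t}$. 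Pigeonhole over the at most $(k+1)!$ possible restrictions yields a set $\mathcal T_I$ of at least $(|C|-k)/(k+1)!\ge v(\mathcal P)/\gamma_k(n)$ columns sharing a common $I$. One now pays the factor $v(\mathcal P)^{2t}\le v(\mathcal P)^{2k}$ \emph{once}, by restricting to $\mathcal P_I$, and then deletes the $B$-cell $(u,J_u(u))$ for each $u\in\mathcal T_I$; each such deletion costs fewer than $|\mathcal P|/v(\mathcal P)^{2t+2}\le|\mathcal P_I|/v(\mathcal P)^2$ permutations, so all $|\mathcal T_I|\le v(\mathcal P)$ of them together cost at most $|\mathcal P_I|/2$. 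This simultaneously gives $|\mathcal P'|\ge|\mathcal P|/(2v(\mathcal P)^{2k})$ and removes $B|\mathcal T_I|\ge v(\mathcal P)^2/\gamma_k(n)^2$ one-entries in a single pass---no iteration inside the lemma is needed. The exponent $2k$ thus comes not from two stages but from the chain of thresholds $|\mathcal P|/v^{0},|\mathcal P|/v^{2},\dots,|\mathcal P|/v^{2k}$ defining criss-crossed sets of sizes $0,1,\dots,k$.
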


\begin{proof}
Let $B:=\lfloor v(\mathcal{P})/ \zeta_{k+1}(n) \rfloor$.
By Lemma~\ref{lem:fatff}, the matrix $M_{\mathcal{P}}$ contains a $B$-fat $(B, k+1)$-formation. 
Let $C$ be the set of the $B$ columns of the formation and let $\mathcal{R} = \{R_1, R_2, \dots, R_{k+1}\}$ be 
the intervals of rows of the formation.

Consider some $t$-tuple $Q=\{q_1, \dots, q_t\}$ of columns from $C$, where $t\leq k+1$.
Let $\mathcal{I}_Q$ be the set of all injective functions $I:Q\rightarrow \mathcal{R}$ 
assigning the intervals $R_j$ to the columns $q_i$. 
We say that a permutation matrix $P$ \emph{obeys} $I\in \mathcal{I}_Q$ if for every $i \in \{1,2, \dots, t\}$, 
the $1$-entry of $P$ in the column $q_i$ lies in some row of $I(q_i)$.
For each $I\in \mathcal{I}_Q$ let $\mathcal{P}_I$ be the set of matrices $P \in \mathcal{P}$ that obey $I$. 
The $t$-tuple $Q$ of columns is said to be \emph{criss-crossed} if
\[
\forall I \in \mathcal{I}_{Q}: |\mathcal{P}_I| \geq |\mathcal{P}|/v(\mathcal{P})^{t}.
\]

Suppose that some $(k+1)$-tuple of columns from $C$ is criss-crossed.
Then every $(k+1)$-permutation appears as a restriction of some matrix from $\mathcal{P}$ on the criss-crossed 
$(k+1)$-tuple of columns. Hence the VC-dimension of $\mathcal{P}$ is at least $k+1$.

Consequently, there is some $t$ such that $0\leq t \leq k$ and the largest criss-crossed set $Q$ 
of columns from $C$ has size $t$.
This means that for every column $u$ outside $Q$, we can find an injective function 
$J_{u}\in\mathcal{I}_{Q\cup\{u\}}$ 
such that $|\mathcal{P}_{J_{u}}| < |\mathcal{P}|/v(\mathcal{P})^{t+1}$.
On the other hand, if we restrict $J_{u}$ on $Q$, the resulting function 
$I_{u}:=J_{u}\restriction{Q}$ satisfies $|\mathcal{P}_{I_{u}}| \geq |\mathcal{P}|/v(\mathcal{P})^{t}$. 
To each choice of $u \in C \setminus Q$, we assign the function $I_{u} \in \mathcal{I}_{Q}$
and the interval $J_{u}(u)$ of rows.
Some function $I \in \mathcal{I}_{Q}$ was then assigned to at least $(|C|-k)/|\mathcal{I}_{Q}|$ columns.
Because $B \geq 4 (k+1)!$, we have 
\[
 \frac{|C|-k}{|\mathcal{I}_{Q}|} \geq \frac{B-k}{(k+1)!} \geq \frac{v(\mathcal{P})}{2\zeta_{k+1}(n)(k+1)!} \geq \frac{v(\mathcal{P})}{\gamma_k(n)}.
\]
Let $\mathcal{T}_I$ be the set of some $\lceil v(\mathcal{P}) / \gamma_k(n) \rceil$ columns that 
were assigned the function $I$. Because $v(\mathcal{P}) \geq 2 \gamma_k(n)$, we have
\begin{equation}
\label{eq:tiBounds}
\frac{v(\mathcal{P})}{\gamma_k(n)} \leq |\mathcal{T}_I| \leq \frac{2v(\mathcal{P})}{\gamma_k(n)} \leq \frac{v(\mathcal{P})}{2}.
\end{equation}

For each column $q_i \in Q$, we remove from $M$ all $1$-entries in 
the column $q_i$ except those that lie in the rows of $I(q_i)$. 
This reduces the number of permutation matrices, but there are still at least 
$|\mathcal{P}|/v(\mathcal{P})^{t}$ 
of them. Then we remove from $M$ the $1$-entries in each column $u \in \mathcal{T}_I$ that lie
in the set of rows $J_{u}(u)$. See Fig.~\ref{fig:crisscrossed}. 
Thus we removed at least $B$ $1$-entries from each of these columns.
The removed $1$-entries of each of these columns decreased the number of permutation matrices by
at most $|\mathcal{P}_{J_u}| \leq |\mathcal{P}|/v(\mathcal{P})^{t+1}$.

\myfig{crisscrossed}{0.5}{A criss-crossed set $Q$ of $3$ columns and a set $\mathcal{T}_I$ of columns 
$\{u_1, \dots, u_5\}$, where $I(q_1)=R_4$, $I(q_2)=R_1$, $I(q_3)=R_2$, $J_{u_1}(u_1)=J_{u_2}(u_2)=R_3$ and 
$J_{u_3}(u_3)=J_{u_4}(u_4)=J_{u_5}(u_5)=R_5$. The $1$-entries from the crossed rectangles are removed.
}

Let $\mathcal{P}' \subset \mathcal{P}$ be the set of permutation matrices containing none of the removed $1$-entries.
Using the bounds from Equation~\eqref{eq:tiBounds}, we obtain
\begin{align*}
|M_{\mathcal{P}'}| & \leq |M_{\mathcal{P}}| - B |\mathcal{T}_I|
\leq n v(\mathcal{P}) - \frac{v(\mathcal{P})^2}{\gamma_k^2(n)}, \\
|\mathcal{P}'| & \geq 
\frac{|\mathcal{P}|}{v(\mathcal{P})^{t}} - 
\frac{|\mathcal{P}| |\mathcal{T}_I|}{v(\mathcal{P})^{t+1}}
\geq \frac{|\mathcal{P}|}{2 v(\mathcal{P})^{t}}
\geq \frac{|\mathcal{P}|}{2 v(\mathcal{P})^{k}}. 
\qedhere
\end{align*}
\end{proof}

\begin{proof}[Proof of Theorem~\ref{thm:vcub}]
Let $\mathcal{P}$ be a set of permutation matrices with VC-dimension $k$.
We will bound its size by iteratively applying Lemma~\ref{lem:iter}. Let $\mathcal{P}_0 = \mathcal{P}$
and for $j\geq 1$ let $\mathcal{P}_{j}$ be the $\mathcal{P}'$ given by the lemma applied on $\mathcal{P}_{j-1}$.

The iterations are further grouped into phases. Let $\phi_0 := 0$. Phase $i$ ends after the first iteration
$\phi_i$ after which $v(\mathcal{P}_{\phi_i}) \leq v(\mathcal{P}_{\phi_{i-1}})/2$. 
Let $v_i := v(\mathcal{P}_{\phi_i})$. Then an iteration of phase $i$ is applied on a set $\mathcal{P}$ of
permutations satisfying
\begin{equation} \label{eq:vbounds}
\frac{v_{i-1}}{2} \leq v(\mathcal{P}) \leq v_{i-1}
\end{equation}
Further, let 
\begin{equation} \label{eq:defT}
T := \gamma^2_k(n) \log(\gamma_k(n)).
\end{equation}
We end after the first phase $l$ satisfying $v_l \leq 2 T$. We thus have 
\begin{equation} \label{eq:plast}
 |\mathcal{P}_{\phi_l}| \leq (2T)^n.
\end{equation}
For every $i \geq 1$ we have 
\begin{equation} \label{eq:vli}
v_{l-i} \geq 2^i T.
\end{equation}

We now count the number of iterations in phase $i$. By Lemma~\ref{lem:iter} and~(\eqref{eq:vbounds}), 
each of these iterations decreases $v(\mathcal{P})$ by at least $v^2_{i-1}/(4\gamma_k^2(n) n)$.
Therefore the phase ends after at most $\lceil 2 \gamma_k^2(n) n / v_{i-1} \rceil \leq 3 \gamma_k^2(n) n / v_{i-1}$
iterations.
Consequently
\begin{align*}
 |\mathcal{P}_{\phi_{i-1}}| & \leq 
|\mathcal{P}_{\phi_i}| \cdot \left(2 v_{i-1}^{k}\right)^{3 \gamma_k^2(n) n / v_{i-1}} \qquad \qquad 
\textrm{by Lemma~\ref{lem:iter} and \eqref{eq:vbounds}}\\
& \leq |\mathcal{P}_{\phi_i}| \cdot 2^{(1+k \log v_{i-1})3 \gamma_k^2(n) n / v_{i-1}} \qquad \\
& \leq |\mathcal{P}_{\phi_i}| \cdot 2^{6k \gamma_k^2(n) n \log v_{i-1} / v_{i-1}} \qquad 
\end{align*}
and
\begin{align*}
|\mathcal{P}| &= |\mathcal{P}_0| \leq |\mathcal{P}_{\phi_l}|  
\prod_{i=0}^{l-1} 2^{6k \gamma_k^2(n) n \log v_{i} / v_{i}} \\
& \leq |\mathcal{P}_{\phi_l}| 2^{6k \gamma_k^2(n) n \sum_{i=1}^{l} \log (2^i T) / (2^i T)} 
\quad \textrm{by~\eqref{eq:vli} and since $\frac{\log(x)}{x}$ is decreasing on $[2T, \infty)$} \\
& \leq (2T)^n \cdot 2^{6k \gamma_k^2(n) n (2+ \log T)/T} \qquad \textrm{by~\eqref{eq:plast}}\\
& \leq \left( 2\gamma_k^2(n)\log(\gamma_k(n)) \right)^n \cdot 2^{30kn} \qquad \textrm{by~\eqref{eq:defT}}.
\end{align*}
Since $\gamma_k(n) \in O(\zeta_{k+1}(n))$, we have
\begin{align*}
r_3(n) &\leq \left(O(\alpha(n)^4 \log(\alpha(n)))\right)^n, \\
r_4(n) &\leq 2^{n \cdot \left(2 \alpha(n) + 3 \log(\alpha(n)) + O(1) \right)}, \\
r_{2t+2}(n) &\leq 2^{n \cdot \left((2/t!)\alpha(n)^t + O(\alpha(n)^{t-1}) \right)} \qquad \textrm{for $t\geq 1$ and} \\
r_{2t+3}(n) &\leq 2^{n \cdot \left((2/t!)\alpha(n)^t \log(\alpha(n)) + O(\alpha(n)^t) \right)} \qquad \textrm{for $t\geq 1$.} 
\qedhere
\end{align*}
\end{proof}

\begin{remark*}
\rm
Let an $n$-function be a total function $f:[n] \rightarrow [n]$.
We say that a set $\mathcal F$ of $n$-functions has \emph{VC-dimension with respect to permutations} 
(abbreviated as \emph{pVC-dimension}) $k$ if $k$ is the largest integer 
such that the set of restrictions of the functions in $\mathcal F$ to some $k$-tuple of
elements from $[n]$ contains all $k$-permutations. Let $r'_k(n)$ be the size of the largest set of 
$n$-functions with pVC-dimension $k$.
Observe that the proofs of this section never use the fact that the matrices in $\mathcal{P}$ have exactly one $1$-entry
in every row. Thus the upper bound from Theorem~\ref{thm:vcub} also holds with $r'_k(n)$ in place of $r_k(n)$.
\end{remark*}

\section{Lower bounds}
\label{sec:lb}

\subsection{Matrices from sequences}
\label{sub:lseq}
Let $DS_{s}$ be the $s \times 2$ matrix with $1$-entry in the $i$th row and $j$th column exactly when $i+j$
is odd. For example
\[
DS_4=
\left(\begin{smallmatrix} 
& \bullet \\ 
\bullet & \\ 
& \bullet \\ 
\bullet & \\ 
\end{smallmatrix}\right)
\qquad
\textrm{and}
\qquad
DS_5=
\left(\begin{smallmatrix} 
& \bullet \\ 
\bullet & \\ 
& \bullet \\ 
\bullet & \\ 
&\bullet & \\ 
\end{smallmatrix}\right).
\] 
Based on a construction of Davenport--Schinzel sequences of order $3$ and length $\Omega(n \alpha(n))$ by 
Hart and Sharir~\cite{HartSharir}, F\"{u}redi and Hajnal~\cite{FurediHajnal} constructed $n \times n$ 
$DS_4$-avoiding matrices $A_n$ with $\Omega(n \alpha(n))$ $1$-entries.
We will use a different construction of $\mathrm{DS}(s)$-sequences of orders $s=3$ and all even $s\geq 4$ 
by Nivasch~\cite{Nivasch10} that together with the following transcription will provide us with 
$DS_{s+1}$-avoiding matrices with the additional property of having the same number of $1$-entries
in every column.

Let $S$ be a sequence over $n$ symbols that can be partitioned into $m$ blocks. Recall that
each block contains only distinct symbols. 
We number the symbols $1, \dots, n$ in the increasing order of their first appearance.
The \emph{sequence$\rightarrow$matrix transcription of $S$, $\SMT(S)$}, is the $m \times n$ 
$(0,1)$-matrix with a $1$-entry in the $i$th row and $j$th column exactly if
the $i$th block in the sequence contains the symbol $j$.

\begin{observation}{\rm (\cite{FurediHajnal})}
\label{obs:onelongerseq}
If $S$ is a sequence avoiding the alternating pattern $aba\dots$ of length $s+2$, 
then $\SMT(S)$ avoids $DS_{s+1}$.
\end{observation}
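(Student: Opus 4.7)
The plan is to prove the contrapositive: assume that $SMT(S)$ contains $DS_{s+1}$ and deduce that $S$ contains the alternating pattern of length $s+2$.

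Unwinding what containment means here, there must exist two column indices $c_1 < c_2$ of $SMT(S)$ and $s+1$ row indices $r_1 < r_2 < \dots < r_{s+1}$ such that, at every position where $DS_{s+1}$ has a $1$-entry, the corresponding entry of $SMT(S)$ is also $1$. Because $DS_{s+1}$ is defined by the parity rule $i+j$ odd, each of its rows has a unique $1$-entry: it lies in column $2$ when the row index is odd and in column $1$ when it is even. Translating through the definition of $SMT(S)$, whose rows correspond to blocks of $S$ and whose columns correspond to symbols, this forces the block $B_{r_1}$ to contain the symbol labeled $c_2$, the block $B_{r_2}$ to contain the symbol labeled $c_1$, the block $B_{r_3}$ to contain $c_2$ again, and so on. Writing $a$ and $b$ for the symbols labeled by $c_1$ and $c_2$ respectively, and reading these occurrences off from the blocks in their given order, we obtain an alternating subsequence $b,a,b,a,\dots$ of length $s+1$ in $S$.

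The final step is to lengthen this subsequence by one. Here I use the only nontrivial ingredient in the definition of $SMT$, namely that the symbols of $S$ are numbered in the order of their first appearance. Since $c_1 < c_2$, the symbol $a$ first appears in $S$ strictly before $b$ does, and in particular strictly before block $B_{r_1}$, which already contains $b$. Prepending this earlier occurrence of $a$ to the alternating subsequence found above yields an alternating subsequence $a,b,a,b,\dots$ of length $s+2$ in $S$, contradicting the hypothesis that $S$ avoids this pattern. The one easy check is that the prepended $a$ is a genuinely new position of $S$ and not the same occurrence used in block $B_{r_2}$, but this is immediate since it lies strictly before $B_{r_1}$, which in turn precedes $B_{r_2}$. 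I do not expect any real obstacle here; the whole argument is essentially a bookkeeping translation between blocks of $S$ and rows of $SMT(S)$, with the first-appearance labeling providing exactly the extra position needed to upgrade length $s+1$ to length $s+2$.
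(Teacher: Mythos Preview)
Your proof is correct and follows essentially the same approach as the paper: both argue the contrapositive, read off an alternating subsequence $b,a,b,\dots$ of length $s+1$ from the occurrence of $DS_{s+1}$, and then use the first-appearance numbering ($a<b$) to prepend an earlier occurrence of $a$, yielding the forbidden pattern of length $s+2$. One tiny imprecision: the first occurrence of $a$ need not lie strictly before block $B_{r_1}$ (it could lie inside $B_{r_1}$, just before the $b$ there), but it does lie strictly before that occurrence of $b$, which is all your argument actually needs.
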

\begin{proof}
If $\SMT(S)$ contains $DS_{s+1}$, then $S$ contains the alternating sequence $ba\dots$ of length $s+1$
for some $a<b$. By the numbering of the symbols, the symbol $a$ appears before the first occurrence of
$b$, therefore $S$ contains $aba\dots$ of length $s+2$ and thus $S$ is not a $\mathrm{DS}(s)$-sequence.
\end{proof}

\begin{lemma}
\label{lem:constrds3}
For every $n$ there exists an $n \times n$ $DS_{4}$-avoiding matrix $M_n$ with at least
$2\alpha(n) - O(1)$ $1$-entries in every column.
\end{lemma}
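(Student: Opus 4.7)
The plan is to construct $M_n$ as the image under the sequence-to-matrix transformation $SMT$ of a suitably chosen Davenport--Schinzel sequence of order $3$ over $n$ symbols. By Observation~\ref{obs:onelongerseq}, any such choice automatically yields a $DS_4$-avoiding matrix, so the content of the proof lies in controlling two quantitative features of the sequence: the number of blocks (which determines the number of rows of $SMT(S)$) and the number of occurrences of each symbol (which, because each block of $S$ is repetition-free, equals exactly the number of $1$-entries in the corresponding column of $SMT(S)$).

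Concretely, I would take the construction used by Nivasch~\cite{Nivasch10} to establish the lower bound $\lambda_3(n)\ge 2n\alpha(n)-O(n)$. That construction is recursive and its symmetry across symbols ensures that every symbol appears the same number of times up to an additive constant; since the total length is $2n\alpha(n)-O(n)$, each of the $n$ symbols therefore occurs at least $2\alpha(n)-O(1)$ times. I would decompose the sequence greedily into maximal blocks (starting a new block precisely when a repetition forces it). Because each block is repetition-free and the per-symbol multiplicity is $\Theta(\alpha(n))$, a direct count shows that the block count $m$ can be arranged to be at most $n$. Applying $SMT$ then yields an $m\times n$ $DS_4$-avoiding matrix with at least $2\alpha(n)-O(1)$ $1$-entries in every column, and padding with $n-m$ all-zero rows at the top (if $m<n$) produces the required $n\times n$ matrix without affecting either property.

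The main obstacle is the bookkeeping that simultaneously guarantees (i)~the block count is at most $n$ and (ii)~every symbol occurs at least $2\alpha(n)-O(1)$ times, particularly for values of $n$ that are not of the special ``nice'' form in which the Ackermann recursion closes cleanly. For those values, I would apply the construction for the largest admissible $n'\le n$, then extend by $n-n'$ extra columns together with zero-row padding to reach the $n\times n$ format, absorbing the small discrepancies into the $O(1)$ additive error in the per-column bound. Since the targeted bound tolerates additive constants, all such adjustments are essentially free.
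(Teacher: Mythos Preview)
Your overall approach---feed a Nivasch $\mathrm{DS}(3)$-sequence through $SMT$ and appeal to Observation~\ref{obs:onelongerseq}---is exactly the paper's, but two of your quantitative steps do not go through as written.

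First, the extension to general $n$. You propose to take the construction for the largest admissible $n'\le n$ and then ``extend by $n-n'$ extra columns together with zero-row padding''. But the statement demands at least $2\alpha(n)-O(1)$ ones in \emph{every} column; an appended column carries no $1$-entries (or at best a constant number if you put a single diagonal $1$ in it), so for those columns the bound fails outright---this is not an additive-$O(1)$ discrepancy that can be absorbed. Moreover the gap $n-n'$ is not small: the admissible values $N_d$ are spaced Ackermann-far apart, so $n-N_d$ can be astronomically large. The paper handles this by taking $\lceil n/N_d\rceil$ disjoint copies of the $N_d$-construction, placing them block-diagonally, and then \emph{deleting} surplus columns to reach exactly $n$; deletion preserves the per-column count, whereas addition does not.

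Second, the block count. Your sentence ``because each block is repetition-free and the per-symbol multiplicity is $\Theta(\alpha(n))$, a direct count shows that the block count $m$ can be arranged to be at most $n$'' is not a valid inference: those two hypotheses only give that the total length is $\Theta(n\alpha(n))$, and hence that the number of blocks is $\Theta(n\alpha(n))/(\text{average block length})$. To get $m\le n$ you need the average block length to be at least the per-symbol multiplicity, which is a structural property of the specific construction, not a consequence of a greedy decomposition. The paper obtains this by choosing the block-length parameter in Nivasch's sequence $Z_d(m)$ to be $8d+4$, so that each symbol appears exactly $2d+1$ times while the natural block decomposition has average block length at least $4d+2$, yielding $M_d\le N_d/2$. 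You should invoke the construction's built-in block structure with a suitable parameter rather than a post-hoc greedy partition.
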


\begin{proof}
Let $A_d(x)$ be the $d$th function of the Ackermann hierarchy. We refer the reader to 
Nivasch's paper~\cite{Nivasch10} for the definition. Let $A(x) = A_x(3)$ be the Ackermann
function.

In Section~6 of~\cite{Nivasch10}, Nivasch constructs for every $d,m \geq 1$ an $ababa$-free sequence $Z_d(m)$.
We use the sequences $Z'_d = Z_d(8d+4)$ which have the following properties:
\begin{itemize}
\item Each symbol appears exactly $2d+1$ times.
\item The sequence can be decomposed into blocks of average length at least $4d+2$ (by~\cite[Lemma~6.2]{Nivasch10}).
\item The number $N_d$ of symbols of the sequence is at most $A_d(8d+4+c)$ (by~\cite[Lemma~6.2]{Nivasch10}), 
where $c$ is an absolute constant. 
\end{itemize}
Let $M_d$ be the number of blocks of $Z'_d$. By counting the length of $Z'_d$ in two ways, $(2d+1)N_d \geq (4d+2)M_d$
and thus $N_d \geq 2M_d$.
By the analysis before Equation~(35) in~\cite{Nivasch10}, there is some $d_0$ such that for 
$d \geq d_0$ we have
\begin{align*}
N_d &\leq A_d(8d+4+c) \leq A_d(A(d+1)) = A(d+2) \quad \textrm{and so} \\
\alpha(N_d) &\leq d+2.
\end{align*}

Then $\SMT(Z'_d)$ is an $M_d \times N_d$ matrix with $2d+1 \geq 2\alpha(N_d)-3$ $1$-entries in every column.
By Observation~\ref{obs:onelongerseq}, $\SMT(Z'_d)$ avoids $DS_4$. 
We construct the matrix $M_{N_d}$ by adding empty rows to $\SMT(Z'_d)$.

For values $n \geq N_{d_0}$ different from $N_d$, we proceed similarly to the method in Section~6 of~\cite{Nivasch10}: 
We consider the largest $N_d$ smaller than $n$ and 
take $\lceil n/N_d \rceil$ copies of $\SMT(Z'_d)$. We place the copies into a single matrix so that
each copy has its own set of consecutive rows and columns. After removing at most half of the 
columns, we obtain a matrix with exactly $n$ columns and at most $n$ rows. The matrix has 
at least $2d+1 \geq 2\alpha(N_{d+1})-5 \geq 2\alpha(n)-5$ $1$-entries in every column. 
The construction of $M_n$ is then finished by adding empty rows to obtain a square matrix.
\end{proof}

\begin{lemma}
\label{lem:constrdseven}
For every $t\geq 1$ and $n$ there exists an $n \times n$ $DS_{2t+3}$-avoiding matrix 
with at least $2^{(1/t!) \alpha(n)^t - O(\alpha(n)^{t-1})}$ $1$-entries in every column. In particular, 
\[
\rmex_{DS_{{2t+3}}}(n) \geq n 2^{(1/t!) \alpha(n)^t - O(\alpha(n)^{t-1})}.
\]
\end{lemma}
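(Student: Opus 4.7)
The plan is to mimic the proof of Lemma~\ref{lem:constrds3}, replacing Nivasch's construction of $ababa$-free sequences ($\mathrm{DS}(3)$-sequences) by his construction of $\mathrm{DS}(2t+2)$-sequences, and then invoking Observation~\ref{obs:onelongerseq} and the $SMT$ transformation. Concretely, I would first pull out from Section~7 of~\cite{Nivasch10} the family of $\mathrm{DS}(2t+2)$-sequences $Z^{(t)}_d$, parameterized by $d$, together with their key quantitative properties: the number $N_d$ of symbols satisfies $\alpha(N_d)\leq d + O(1)$; every symbol appears exactly $L_d$ times, where $L_d = 2^{(1/t!)d^t - O(d^{t-1})}$; and the sequence can be partitioned into $M_d$ blocks whose average length is at least $2L_d$, so that $M_d\leq N_d/2$ (the role played by the estimate $(2d+1)N_d \geq (4d+2)M_d$ in the $\mathrm{DS}(3)$ case). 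As in Lemma~\ref{lem:constrds3}, the parameter of Nivasch's construction should be chosen analogously to $8d+4$, so as to force the average-block-length condition simultaneously with the number of occurrences of each symbol.

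Next, I would apply the transformation $SMT(Z^{(t)}_d)$, obtaining an $M_d\times N_d$ matrix with exactly $L_d$ $1$-entries in every column: this holds because each symbol appears $L_d$ times and each block contains each symbol at most once. By Observation~\ref{obs:onelongerseq}, since $Z^{(t)}_d$ avoids the alternating pattern of length $2t+4$, the matrix $SMT(Z^{(t)}_d)$ avoids $DS_{2t+3}$. Padding with empty rows yields an $N_d\times N_d$ matrix that already proves the lemma for the values $n = N_d$.

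For a general $n$, I would use exactly the scaling argument from the end of the proof of Lemma~\ref{lem:constrds3}: pick the largest $d$ with $N_d\leq n$, take $\lceil n/N_d\rceil$ disjoint diagonal copies of $SMT(Z^{(t)}_d)$ (each on its own set of rows and columns), remove at most half of the columns so that exactly $n$ columns remain, and add empty rows to reach $n$ rows. Every remaining column still carries $L_d$ $1$-entries, and because $\alpha(N_{d+1}) - \alpha(N_d) = 1$ and $\alpha(N_d)$ differs from $\alpha(n)$ by an additive constant, we have
\[
L_d \;=\; 2^{(1/t!)\alpha(n)^t - O(\alpha(n)^{t-1})},
\]
which gives the column bound. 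Summing (or multiplying by $n$) then gives the claimed $\rmex_{DS_{2t+3}}(n)$ lower bound.

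The main obstacle is the block-length bookkeeping: Nivasch's Section~7 construction is stated primarily in terms of sequence length and symbol count, and extracting an explicit control of the average block length (so that $M_d\leq N_d$) requires re-reading his inductive construction and verifying that, at each recursive step, each expansion of a symbol is long enough compared to the number of times the symbol is going to reappear globally. If the straightforward parameter choice analogous to $Z_d(8d+4)$ does not immediately yield $M_d\leq N_d$, one can either coarsen the block partition of $Z^{(t)}_d$ by concatenating consecutive blocks whose union is still repetition-free, or equivalently delete a few occurrences per symbol so as to reduce the block count at the cost of an $O(d^{t-1})$ loss in the exponent of $L_d$, which is absorbed by the $-O(\alpha(n)^{t-1})$ slack in the statement.
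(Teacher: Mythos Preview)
Your proposal is correct and follows essentially the same route as the paper: both use Nivasch's Section~7 $\mathrm{DS}(2t+2)$-sequences, apply $SMT$ together with Observation~\ref{obs:onelongerseq}, and then copy the diagonal-copies argument from Lemma~\ref{lem:constrds3} for general $n$. The only refinement worth noting is that the ``main obstacle'' you flag is not actually an obstacle: in Nivasch's construction $S^s_k(m)$ the parameter $m$ is precisely the block length, and each symbol appears exactly $\mu_s(k)=2^{\binom{k}{t}}$ times, so the paper simply chooses $m=2\mu_s(k)$ and gets $M_{s,k}\leq N_{s,k}/2$ for free---no coarsening or deletion of occurrences is needed.
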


\begin{proof}
Let $s:=2t+2$. Since $s$ is even and $s\geq 4$, we can use Nivasch's 
construction~\cite[Section 7]{Nivasch10} of $\mathrm{DS}(s)$-sequences $S^s_k(m)$ with parameters $k,m \geq 2$.
Let $\mu_s(k) := 2^{\binom{k}{(s-2)/2}}$.
We use the sequences $S'_{s,k} = S^s_k(2 \mu_s(k))$, which have the following properties:
\begin{itemize}
\item
Each symbol of $S'_{s,k}$ appears exactly $\mu_s(k)$ times 
(by~\cite[Equation (47)]{Nivasch10}).
\item 
The sequence can be decomposed into blocks of length $2 \mu_s(k)$.
\item
For every $k \geq k_0(s)$, where $k_0(s)$ is a properly chosen constant, the number $N_{s,k}$ of symbols of 
the sequence satisfies $\alpha(N_{s,k}) \leq k+3$ (by~\cite[Equations (50), (51)]{Nivasch10} and analysis similar
to the one in the proof of Lemma~\ref{lem:constrds3}).
\end{itemize}

Let $M_{s,k}$ be the number of blocks of $S'_{s,k}$. It satisfies $2M_{s,k} \leq N_{s,k}$.
The matrix $\SMT(S'_{s,k})$ is a $DS_{s+1}$-avoiding $M_{s,k} \times N_{s,k}$ matrix with at least $\mu_{s,k}$
$1$-entries in every column.
For every $n\geq N_{s,k_0(s)}$ we take the largest $k$ such that $n \geq N_{s,k}$ and proceed in the same 
way as in the proof of Lemma~\ref{lem:constrds3} with $\SMT(S'_{s,k})$ in the place of $\SMT(Z'_d)$.
We have
\[
k \geq \alpha(N_{s,k+1})-4 \geq \alpha(n)-4
\]
and so the number of $1$-entries in every column of the resulting matrix is
\[
\mu_s(k) = 2^{\binom{k}{(s-2)/2}} = 2^{\binom{k}{t}} \geq 2^{(1/t!)k^t - O(k^{t-1})} 
\geq 2^{(1/t!) \alpha(n)^t - O(\alpha(n)^{t-1})}.
\qedhere
\]
\end{proof}

\begin{remark*}
\rm
We could also use the construction of $DS_4$-avoiding matrices with $\Omega(n \alpha(n))$ $1$-entries
by F\"{u}redi and Hajnal~\cite{FurediHajnal}.
The matrices do not have the same number of $1$-entries in every column, but it can be shown that every column 
has at most constant multiple of the average number of $1$-entries per column. 
This would be enough for our purposes.
The base case of the inductive construction in~\cite{FurediHajnal} needs a small fix. 
The matrices $M(s,1)$ and $M(1,s)$ do not satisfy conditions imposed on them. This can be fixed for example 
by taking $\left(\begin{smallmatrix} 1 & 0 \\ 0 & 1 \\ \end{smallmatrix}\right)$ 
for $M(s,1)$ and the matrix with the leftmost column full of $1$-entries and with no $1$-entries in the 
other columns for $M(1,s)$.
\end{remark*}

\subsection{Numbers of $1$-entries in matrices}
\label{sub:lmat}
A matrix is \emph{$k$-full} if some $k$-tuple of its columns contains every $k$-permutation matrix.
The \emph{fullness} of a matrix $A$ is the largest $k$ such that $A$ is $k$-full. In this section
we show a lower bound on the maximum number $p_k(n)$ of $1$-entries in an $n \times n$ matrix 
with fullness $k$. This is achieved by showing that a $k$-full matrix contains the matrix $DS_k$
and applying the results from Section~\ref{sub:lseq}. We prove a slightly stronger statement 
that will be used in the next section.

Let $J_2:=\left(\begin{smallmatrix} & \bullet \\\bullet &  \\ \end{smallmatrix}\right)$.
For an $l$-permutation matrix $P$, we define the \emph{$J_2$-expansion of $P$}, $P^{J_2}$, to be the 
$2l \times 2l$ permutation matrix created by substituting every $1$-entry of $P$ by $J_2$ and every
$0$-entry by a $2 \times 2$ block full of $0$-entries. 

A pair of rows $2i$, $2i+1$ of $P^{J_2}$ will be called \emph{contractible} if the $1$-entry in row $2i$ is to 
the left of the $1$-entry in row $2i+1$. That is, when $\pi^{-1}(i)<\pi^{-1}(i+1)$, where 
$\pi$ is the permutation corresponding to $P$.
To \emph{contract} a pair of rows means to replace them by a single row with $1$-entries in the columns 
where at least one of the two original rows had a $1$-entry.

Let an $(n,m)$-function be a total function $f:[n] \rightarrow [m]$.
A \emph{function matrix} is a $(0,1)$-matrix with exactly one $1$-entry in every column. 
Assigning to a function $f$ a function matrix $G_{f}$ with $G_{f}(i,j)=1 \Leftrightarrow f(j)=i$ provides
a bijection between $(n,m)$-functions and $m \times n$ function matrices.

The set of \emph{$J_2$-expansion flattenings} of $P$ is the set $\mathcal F(P^{J_2})$ of function matrices 
that can be obtained from $P^{J_2}$ by contracting some pairs of contractible rows. Let $\mathcal{P}_l$ be 
the set of $l$-permutation matrices and let
\[
\Phi(l) := \{\mathcal{F}(P^{J_2}): P \in \mathcal{P}_l \}.
\]
For example
\[
\Phi(2) = \left\{ \left\{ 
\left(\begin{smallmatrix}
 &  &  & \bullet \\
 &  & \bullet &  \\
 & \bullet &  &  \\
\bullet &  &  &  \\
\end{smallmatrix}\right)
 \right\},
\left\{
\left(\begin{smallmatrix}
 & \bullet &  &  \\
\bullet &  &  &  \\
 &  &  & \bullet \\
 &  & \bullet &  \\
\end{smallmatrix}\right)
,
\left(\begin{smallmatrix}
 & \bullet &  &  \\
\bullet & &  & \bullet \\
 &  & \bullet &  \\
\end{smallmatrix}\right)
\right\}\right\}
.
\]

\begin{lemma}
\label{lem:todslargeeven}
If an $n \times 2l$ matrix $A$ contains one matrix from $\mathcal F(P^{J_2})$ for every $l$-permutation matrix $P$, 
then $A$ contains an occurrence of $DS_{2l}$ on columns $\{ 2i-1, 2i\}$ for some $i \in [l]$.
\end{lemma}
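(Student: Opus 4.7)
Plan: I will prove the lemma by induction on $l$. The base case $l=1$ is immediate: the only $1$-permutation matrix $P$ satisfies $P^{J_2}=J_2$ with no contractible row-pair, so $\mathcal{F}(P^{J_2})=\{J_2\}=\{DS_2\}$, and the hypothesis directly gives $A\supseteq DS_2$ on columns $\{1,2\}$.

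For the inductive step, for each column pair $j\in[l]$ I define $t_j$ as the smallest row $r'$ for which there exists $r<r'$ with $A[r,2j]=A[r',2j-1]=1$ (the earliest completion of a $J_2$-occurrence in pair $j$), setting $t_j=+\infty$ if no such $r'$ exists. If some $t_j=+\infty$, no flattening of any $P_\pi^{J_2}$ can embed in $A$ (every flattening needs a $J_2$ in each pair), contradicting the hypothesis. I then iteratively peel pairs from the top. At step $k$, let $j_k$ be the peeled pair (with $j_1$ minimizing $t_{j_1}$, and tie-breaking chosen to preserve the strict monotonicity used below). Consider all $\pi\in S_l$ with $\pi^{-1}(1)=j_k$; the hypothesis provides a flattening of $P_\pi^{J_2}$ in $A$, which may be chosen so that the top bucket uses pair $j_k$'s earliest $J_2$-occurrence (ending at row $t_{j_k}$). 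The remaining $l-1$ buckets then embed in the sub-matrix $A_k$ on rows $\geq t_{j_k}$ and columns $[2l]\setminus\{2j_k-1,2j_k\}$ (row $t_{j_k}$ included to accommodate a possible contraction at the bucket-1-to-2 transition). Each $(l-1)$-permutation $\pi'$ of the remaining pairs extends uniquely to such a $\pi$, so $A_k$ inherits the hypothesis of the lemma for $l-1$, and by the inductive hypothesis $A_k$ contains $DS_{2(l-1)}$ on some pair $j_{k+1}\in[l]\setminus\{j_k\}$.

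If pair $j_{k+1}$ has a $J_2$-occurrence in $A$ ending strictly before the first row $r^\ast_1$ of this $DS_{2(l-1)}$, prepending it extends the alternating pattern to length $2l$, yielding $DS_{2l}$ on pair $j_{k+1}$ in $A$, and we are done. Otherwise $t_{j_{k+1}}>t_{j_k}$ strictly, and we iterate with $j_{k+1}$ replacing $j_k$. Since the peeled pairs have strictly increasing $t$-values, the sequence $j_1,j_2,\ldots$ consists of distinct pairs and must terminate within $l$ steps; at that point the inductive output must be a previously peeled pair whose $t$-value is strictly less than the current threshold, which forces the prepend case to succeed.

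The main obstacle will be the delicate bookkeeping around contractions. When contraction is used in the peeling step, row $t_{j_k}$ is included in $A_k$ and the $DS_{2(l-1)}$ produced by the inductive hypothesis may begin at $r^\ast_1=t_{j_k}$; in that situation a prepended $J_2$ must end strictly before $t_{j_k}$ rather than at $t_{j_k}$. The strict inequality $t_{j_{k+1}}<t_{j_k}$ from the terminating iteration supplies exactly this strictness, but this requires a tie-breaking rule (for instance, preferring larger column indices among pairs with equal $t_j$, which also incidentally disallows contraction at the first transition) to ensure that the $t$-values along the peeling sequence are strictly monotone throughout.
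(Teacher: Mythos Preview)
Your inductive framework is correct and matches the paper's: peel one column-pair using its earliest $J_2$, verify that the remaining $2(l-1)$ columns (restricted to rows at or below that $J_2$) inherit the hypothesis for $l-1$, apply induction, and try to prepend. The verification that $A_k$ inherits the hypothesis is sound.

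The gap is in your termination argument. You claim that when the prepend fails one has $t_{j_{k+1}}>t_{j_k}$ strictly, but all that follows from failure is $t_{j_{k+1}}\ge r_1^\ast\ge t_{j_k}$; both inequalities can be equalities simultaneously. Your tie-breaking rule only governs the \emph{choice} of $j_1$, whereas for $k\ge 1$ the pair $j_{k+1}$ is handed to you by the inductive call and you have no control over it. If several pairs share the common minimum value $T$ of $t_j$, nothing prevents the inductive output from landing on another pair with $t=T$, with $r_1^\ast=T$ as well; the sequence $j_1,j_2,\dots$ can then cycle. The parenthetical remark that the tie-breaking ``disallows contraction at the first transition'' is not correct either: contraction between rows $2$ and $3$ of $P_\pi^{J_2}$ depends only on whether $\pi^{-1}(2)>\pi^{-1}(1)=j_k$, which is not forced by your rule unless $j_k=l$.

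The paper avoids the iteration altogether by making the opposite choice: it peels the pair $t$ with \emph{maximum} $h_t$ (your $t_j$), breaking ties by taking the largest index. Then every surviving pair $i$ has $h_i\le h_t$, with strict inequality whenever $i>t$. To deal with $i<t$ (where $h_i=h_t$ is possible), the paper additionally zeroes out the first $2(t-1)$ entries of the first row of the reduced matrix $A^{\setminus t}$. One checks that the reduced matrix still inherits the hypothesis (the only entry of the residual flattening that could land in that first row lies in a column $>2(t-1)$, since contraction forces $\pi^{-1}(2)>t$), and the zeroing guarantees that the inductively found $DS_{2(l-1)}$ on a pair $i<t$ cannot start in row $h_t$. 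Thus in every case $h_i<r_1^\ast$, the prepend succeeds immediately, and no iteration is needed.
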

\begin{proof}
We proceed by induction on $l$. The case $l=1$ is trivial since 
$\Phi(l) = \{\{J_2\}\} = \{\{DS_2\}\}$.

The \emph{$i$th pair of columns} of $A$ is the pair of columns $\{ 2i-1, 2i\}$.
For each $i\leq l$ let $h_i$ be the smallest number such that the $i$th pair of columns of $A$
contains $J_2$ on a subset of rows $\{1 \dots h_i \}$.
Let $t$ be the largest number satisfying $\forall i ~ h_i \leq h_t$.
Let $A^{\setminus t}$ be the $(n-h_t+1) \times 2(l-1)$ matrix obtained from $A$ by removing 
the columns of the $t$th pair, removing the top $h_t-1$ rows and then changing all $1$-entries among the 
first $2(t-1)$ entries in the first row to $0$'s. See Fig.~\ref{fig:lbmatr}

\myfig{lbmatr}{0.7}{Induction step in the proof of Lemma~\ref{lem:todslargeeven}. In this example $t=3$.
}

For every $P$ with the topmost $1$-entry in column $t$, $A$ contains an occurrence of some 
$F \in \mathcal F(P^{J_2})$, that uses the two $1$-entries of the topmost occurrence of $J_2$
on the $t$th pair of columns. These occurrences induce an occurrence of some matrix from every set 
$\mathcal F \in \Phi(l-1)$ in $A^{\setminus t}$.
By the induction hypothesis, $A^{\setminus t}$ contains $DS_{2(l-1)}$ on some $i$th pair of columns.
By the choice of $t$, this occurrence of $DS_{2(l-1)}$ in $A$ does not use any of the rows $\{1 \dots h_i \}$.
Thus we obtain an occurrence of $DS_{2l}$ in $A$.
\end{proof}

For an $l$-permutation matrix $P$ and $i\leq 2l+1$ we define $P^{J_2}(i)$ to be the $(2l+1)$-permutation matrix 
that becomes $P^{J_2}$ after removing the lowest row and column $i$. 
Then $\mathcal F(P^{J_2},i)$ is the set of function matrices that can be obtained from $P^{J_2}(i)$ 
by contracting some pairs of contractible rows. For example
\[
\mathcal F\left(\left( \begin{smallmatrix} \bullet & ~ \\ ~ & \bullet \end{smallmatrix}\right)^{J_2},4 \right) =
\left\{
\left(\begin{smallmatrix}
        & \bullet &         &         &         \\
\bullet &         &         &         &         \\
        &         &         &         & \bullet \\
        &         & \bullet &         &         \\
        &         &         & \bullet &         \\
\end{smallmatrix}\right)
,\quad
\left(\begin{smallmatrix}
        & \bullet &         &         &         \\
\bullet &         &         &         &         \\
        &         &         &         & \bullet \\
        &         & \bullet & \bullet &         \\
\end{smallmatrix}\right)
,\quad
\left(\begin{smallmatrix}
        & \bullet &         &         &         \\
\bullet &         &         &         & \bullet \\
        &         & \bullet &         &         \\
        &         &         & \bullet &         \\
\end{smallmatrix}\right)
,\quad
\left(\begin{smallmatrix}
        & \bullet &         &         &         \\
\bullet &         &         &         & \bullet \\
        &         & \bullet & \bullet &         \\
\end{smallmatrix}\right) \right\}.
\]
Let
\[
\Phi(l,i) := \{\mathcal{F}(P^{J_2},i): P \in \mathcal{P}_l \}.
\]

\begin{lemma}
\label{lem:todslargeodd}
Let $A$ be an $n \times (2l+1)$ matrix and let $A'$ be the matrix obtained from
$A$ by removing the last $1$-entry from each column. If $A'$ contains one matrix 
from $\mathcal F(P^{J_2},i)$ for every $l$-permutation matrix $P$ and every 
$i \in [2l+1]$, then $A$ contains $DS_{2l+1}$.
\end{lemma}
\begin{proof}
For each $i \leq 2l+1$ let $d_i$ be the row number of the lowest $1$-entry in the $i$th column of $A'$.
Let $t$ be any of the rows satisfying $\forall i ~ d_i \geq d_t$.
Let $A^{\setminus t}$ be the $d_t \times 2l$ matrix obtained from $A$ by removing the $t$th column 
and all rows below the $d_t$th row.
Then $A^{\setminus t}$ contains one matrix from every $\mathcal F \in \Phi(l)$, 
therefore by Lemma~\ref{lem:todslargeeven} the matrix $A^{\setminus t}$ contains an occurrence of $DS_{2l}$.
By the choice of $t$, the matrix $A$ contains $DS_{2l+1}$.
\end{proof}

\begin{corollary}
\label{cor:pknlb}
For every $k\geq 1$
\[
p_{k}(n) \geq \rmex_{DS_{k+1}}(n).
\]
\end{corollary}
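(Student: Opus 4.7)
The plan is to prove the corollary by contraposition: I will show that every $(k+1)$-full $n \times n$ matrix contains $DS_{k+1}$. This suffices, because any matrix $M$ realizing $\rmex_{DS_{k+1}}(n)$ avoids $DS_{k+1}$, hence is not $(k+1)$-full, hence has fullness at most $k$, and therefore contributes to $p_k(n)$.

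Fix a $(k+1)$-full matrix $M$ and let $C$ denote the $n \times (k+1)$ submatrix of $M$ on a $(k+1)$-tuple of columns containing every $(k+1)$-permutation matrix. The argument then splits by parity of $k+1$. When $k+1 = 2l$ is even, I would invoke Lemma~\ref{lem:todslargeeven} on $C$: for every $l$-permutation matrix $P$, the $2l$-permutation matrix $P^{J_2}$ trivially lies in $\mathcal F(P^{J_2})$ (take the flattening obtained by contracting no pair of rows at all), and because $C$ contains every $2l$-permutation matrix, it contains $P^{J_2}$ in particular. Hence the hypothesis of Lemma~\ref{lem:todslargeeven} is met and $C$ contains $DS_{2l} = DS_{k+1}$.

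When $k+1 = 2l+1$ is odd, the argument is entirely analogous with Lemma~\ref{lem:todslargeodd} in place of Lemma~\ref{lem:todslargeeven}: for every $l$-permutation matrix $P$ and every $i \in [2l+1]$, the $(2l+1)$-permutation matrix $P^{J_2}(i)$ belongs to $\mathcal F(P^{J_2},i)$ (again with zero contractions) and occurs in $C$ by $(k+1)$-fullness, so the lemma applies and yields $DS_{2l+1}$ inside $C$.

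There is essentially no main obstacle — the combinatorial work has already been done in Lemmas~\ref{lem:todslargeeven} and~\ref{lem:todslargeodd}. The only observation needed for the corollary is that $P^{J_2}$ and $P^{J_2}(i)$ are themselves degenerate flattenings of themselves, so $(k+1)$-fullness supplies, for free, an occurrence of some element of $\mathcal F(P^{J_2})$, respectively $\mathcal F(P^{J_2},i)$, for every choice of $P$ (and $i$). The only minor sanity check is the base case $k=1$, where $l=1$ and $P^{J_2}=J_2=DS_2$, so the required containment is immediate.
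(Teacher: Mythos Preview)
Your proposal is correct and follows essentially the same approach as the paper: split by parity of $k+1$, observe that the uncontracted matrices $P^{J_2}$ (respectively $P^{J_2}(i)$) are themselves permutation matrices lying in $\mathcal F(P^{J_2})$ (respectively $\mathcal F(P^{J_2},i)$), and invoke Lemma~\ref{lem:todslargeeven} or Lemma~\ref{lem:todslargeodd} accordingly. The paper's proof is terser---it merely says that each $\mathcal F(P^{J_2},i)$ contains some $(2l+1)$-permutation matrix without naming it---but the content is identical to what you wrote.
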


\begin{proof}
When $k$ is even, the result follows from Lemma~\ref{lem:todslargeodd}, 
since for every $l$-permutation matrix $P$ and for every $i \in [2l+1]$, 
the set $\mathcal F(P^{J_2},i)$ contains some $(2l+1)$-permutation matrix,
namely the matrix without any row contractions.
The result for $k$ odd follows from Lemma~\ref{lem:todslargeeven}.
\end{proof}

The row contractions did not play any role in the proof of Corollary~\ref{cor:pknlb}, 
but they will play a role in Section~\ref{sub:lperm} below.
\begin{corollary}
\label{cor:extrlb}
We have
\begin{align*}
p_3(n) & \geq 2 n \alpha(n) - O(n),  \\
p_k(n) & \geq n 2^{(1/t!)\alpha(n)^t - O(\alpha(n)^{t-1})} \quad \textrm{for $k \geq 4$,}
\end{align*}
where $t := \lfloor (k-2)/2 \rfloor$.
\end{corollary}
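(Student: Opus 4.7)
The plan is to combine Corollary~\ref{cor:pknlb} with the explicit constructions from Section~\ref{sub:lseq}. By Corollary~\ref{cor:pknlb} we have $p_k(n) \geq \rmex_{DS_{k+1}}(n)$, so it suffices to exhibit $n \times n$ $(0,1)$-matrices avoiding $DS_{k+1}$ with many $1$-entries. Since $DS_s$ sits as the top $s \times 2$ submatrix of $DS_{s+1}$, any matrix avoiding $DS_s$ also avoids $DS_{s+1}$, so $\rmex_{DS_s}(n) \le \rmex_{DS_{s+1}}(n)$; this monotonicity lets us always reduce to a construction with odd index, which is exactly what Lemmas~\ref{lem:constrds3} and~\ref{lem:constrdseven} provide.

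For $k=3$, I apply Lemma~\ref{lem:constrds3} directly: it produces an $n \times n$ $DS_4$-avoiding matrix with at least $2\alpha(n)-O(1)$ $1$-entries in every column, hence at least $n(2\alpha(n)-O(1)) = 2n\alpha(n) - O(n)$ total. Via Corollary~\ref{cor:pknlb} this yields $p_3(n) \geq 2n\alpha(n) - O(n)$.

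For $k\geq 4$ with $t = \lfloor (k-2)/2 \rfloor$, write $k+1 \in \{2t+3, 2t+4\}$ according to parity. By the monotonicity observation above,
\[
\rmex_{DS_{k+1}}(n) \;\ge\; \rmex_{DS_{2t+3}}(n).
\]
Lemma~\ref{lem:constrdseven} supplies, for $t\geq 1$, an $n\times n$ $DS_{2t+3}$-avoiding matrix with at least $2^{(1/t!)\alpha(n)^t - O(\alpha(n)^{t-1})}$ $1$-entries in every column, giving
\[
\rmex_{DS_{2t+3}}(n) \;\ge\; n\cdot 2^{(1/t!)\alpha(n)^t - O(\alpha(n)^{t-1})}.
\]
Invoking Corollary~\ref{cor:pknlb} once more, we obtain the desired bound on $p_k(n)$.

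There is no real obstacle here; the work has already been done in the preceding lemmas. The only thing worth double-checking is the bookkeeping: that $\lfloor (k-2)/2 \rfloor$ indeed equals the $t$ appearing in Lemma~\ref{lem:constrdseven} when $k+1$ is $2t+3$, and that the monotonicity $\rmex_{DS_{2t+3}}(n) \le \rmex_{DS_{2t+4}}(n)$ is oriented the right way to cover the odd values of $k$. Both checks are immediate, so the proof is essentially a two-line citation of Corollary~\ref{cor:pknlb} together with the constructive lemmas of Section~\ref{sub:lseq}.
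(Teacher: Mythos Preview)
Your proof is correct and follows essentially the same approach as the paper: combine Corollary~\ref{cor:pknlb} with Lemmas~\ref{lem:constrds3} and~\ref{lem:constrdseven}, and handle odd $k\ge 5$ by a trivial monotonicity step. The only cosmetic difference is that for odd $k$ the paper invokes $p_k(n)\ge p_{k-1}(n)$, whereas you use the equally immediate $\rmex_{DS_{2t+3}}(n)\le \rmex_{DS_{2t+4}}(n)$; both reductions are one-line observations.
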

\begin{proof}
The lower bound for $k=3$ is by Lemma~\ref{lem:constrds3} and Corollary~\ref{cor:pknlb} and from 
Lemma~\ref{lem:constrdseven} and Corollary~\ref{cor:pknlb} when $k$ is even and $k>3$. 
When $k$ is odd and $k\geq 5$, we use $p_k(n) \geq p_{k-1}(n)$.
\end{proof}

\subsection{Sets of permutations}
\label{sub:lperm}
\begin{proof}[Proof of Theorem~\ref{thm:vclb}]

Given $k$ and $n$, we take the $DS_{k+1}$-avoiding $n \times n$ matrix $A_{k,n}$ from Lemma~\ref{lem:constrds3} 
if $k=3$ or from Lemma~\ref{lem:constrdseven} if $k\geq 4$ is even. Let $\rho_k(n)$ be the number of
$1$-entries that $A_{k,n}$ has in every column, that is $\rho_2(n)= 2\alpha(n) - O(1)$ and
for $t\geq 1$ $\rho_{2t+2}(n) = 2^{(1/t!)\alpha(n)^t - O(\alpha(n)^{t-1})}$.

From $A_{k,n}$ we construct a set of $\rho_k(n)^{n}$ $n \times n$ function matrices by choosing 
some $1$-entry from each column. Then we remove all empty rows, which can make some originally different
function matrices identical. However, the resulting set $\mathcal H$ has size at least 
$\rho_k(n)^{n}/2^n$ as there are at most $2^n$ distinct ways to enlarge a function matrix
by adding empty rows to a matrix with $n$ rows. 

The last step is inflating the rows of the function matrices in $\mathcal H$ into diagonal matrices to 
obtain a set $\mathcal Q$ of $n$-permutation matrices. That is, for every $H \in \mathcal H$, we order the 
$1$-entries primarily by the rows from top to bottom and secondarily from left to right. The permutation 
matrix $Q$ will have $1$-entries on those positions $(i,j)$ such that $H$ has its $i$th $1$-entry in column $j$. 
The reverse process consists of contracting intervals of rows of a permutation matrix $Q$ and we have at most
$2^{n}$ possibilities how to choose the intervals. Thus every permutation matrix can be created 
by expanding at most $2^{n}$ different function matrices.
The size of the set $\mathcal Q$ is
\[
|\mathcal Q| \geq \frac{\rho_k(n)^{n}}{2^{n}2^{n}} = \left( \frac{\rho_k(n)}{4}  \right)^n.
\]
It remains to show that the VC-dimension of $\mathcal Q$ is at most $(k+1)$. We assume for contradiction that
for some $(k+1)$-tuple $C$ of columns and every $(k+1)$-permutation matrix $R$ there exists 
$Q \in \mathcal Q$ that contains $R$ on $C$.

Consider some permutation matrix $Q \in \mathcal Q$ and let $H \in \mathcal H$ be the function 
matrix from which $Q$ was created. 
The matrix $H$ can thus be constructed from $Q$ by contracting some intervals of rows such that 
the restriction of $Q$ on each of these intervals of rows is a diagonal matrix.
So the only change that these contractions can make on an occurrence of $P^{J_2}$ in $Q$ is that 
some pairs of its contractible rows can be contracted. Thus an occurrence of $P^{J_2}$ in $Q$
on the set $C$ of columns can only be created from an occurrence of some 
$F\in\mathcal F(P^{J_2})$ on $C$ in $H$ and in $A_{k,n}$ as well. 
Similarly, an occurrence of $P^{J_2}(i)$ on $C$ in $Q$ implies an occurrence of some matrix 
from $\mathcal F(P^{J_2},i)$ on $C$ in $H$ and $A_{k,n}$. See Fig.~\ref{fig:vclb}.

\myfig{vclb}{0.8}{Expansion of an occurrence of a matrix from $\mathcal F(P^{J_2})$.}

Therefore for $k \geq 4$ even, for every $(k/2)$-permutation matrix $P$ and every $i\in [k+1]$,
some matrix from $\mathcal F(P^{J_2},i)$ occurs on $C$ in $A_{k,n}$. Thus, by
Lemma~\ref{lem:todslargeodd}, $A_{k,n}$ contains $DS_{k+1}$, a contradiction.
Similarly if $k=3$, we get a contradiction by Lemma~\ref{lem:todslargeeven}.
\end{proof}

\subsection*{Acknowledgements}
We are grateful to Seth Pettie for pointing us to a way to improve the result of Lemma~\ref{lem:form2spl}.
We also thank Ottfried Cheong, Martin Klazar and Pavel Valtr for inspiring discussions.

\end{document}